\let\confidentialstring=\relax
\let\headnote=\relax
\def\mykeywords{%
\ifCLASSOPTIONonecolumn
Linear systems, time-varying systems, strong structural properties,
controllability, observability;
\fi
MSC: Primary, 93B05;
Secondary, 93B07, 93C05, 15A03, 05C50%
.}
\def\confidentialstring{%
This work has been accepted for publication in the
\emph{IEEE Trans. Automat. Control};
\href{http://dx.doi.org/10.1109/TAC.2014.2320297}{DOI: 10.1109/TAC.2014.2320297}.
Copyright may be transferred without notice, after which this
version may no longer be accessible.%
}
\def\myname{Gunther Reissig, Christoph Hartung, and Ferdinand Svaricek}
\def\mytitle{Strong Structural Controllability and Observability
\ifCLASSOPTIONonecolumn\else\\\fi
of Linear Time-Varying Systems}
\let\ORGforeignlanguage\foreignlanguage
\def\foreignlanguage#1{\lowercase{\ORGforeignlanguage{#1}}}
\def\href#1#2{\texttt{#2}}
\newcounter{GraphConditionCtr}
\begin{document}\bstctlcite{IEEEtranBSTCTL_AuthorNoDash}%
\makeatletter
\renewenvironment{proof}[1][\proofname]{\par
  \pushQED{\qed}%
  \normalfont \topsep6\p@\@plus6\p@\relax
  \trivlist
  \item[\hskip\labelsep
        \itshape
    #1\@addpunct{.}]\ignorespaces
}{%
  \popQED\endtrivlist\@endpefalse
}
\markboth{\hspace*{\fill}\headnote\hspace*{\fill}}%
{\hspace*{\fill}\headnote\hspace*{\fill}}%
\makeatother

\title{%
\mytitle}

\author{\myname%
\thanks{%
\ifCLASSOPTIONdraftclsnofoot\baselineskip1.5em\fi{}%
The authors are with the
University of the
Federal
Armed Forces Munich,
Dept. Aerospace Eng.,
Chair of Control Eng., %
D-85577 Neubiberg (Munich),
Germany,
\ifCLASSOPTIONdraftclsnofoot%
\texttt{\{gunther.reissig,christoph.hartung,ferdinand.svaricek\}@unibw.de}
\else
\url{http://www.reiszig.de/gunther/};
\url{http://www.unibw.de/lrt15/};
\fi
}%
\thanks{\confidentialstring{} \headnote}
}

\maketitle

\begin{abstract}
\ifCLASSOPTIONonecolumn\relax\else\boldmath\fi%
In this note we consider continuous-time systems
$\dot x(t) = A(t) x(t) + B(t) u(t)$,
$y(t) = C(t) x(t) + D(t) u(t)$
as well as discrete-time systems
$x(t+1) = A(t) x(t) + B(t) u(t)$,
$y(t) = C(t) x(t) + D(t) u(t)$
whose coefficient matrices $A$, $B$, $C$ and $D$
are not exactly known. More precisely,
all that is known about the systems is their nonzero pattern, i.e.,
the locations of the nonzero entries in the coefficient matrices.
We characterize the patterns that guarantee controllability
and observability, respectively,
for all choices of nonzero time functions at the matrix positions
defined by the pattern,
which extends a result by \person{Mayeda} and \person{Yamada}
for time-invariant systems.
As it turns out, the conditions on the patterns
for time-invariant and for time-varying discrete-time systems
coincide, provided that the underlying time interval is
sufficiently long.
In contrast, the conditions for time-varying
continuous-time systems are more restrictive than in
the time-invariant case.
\end{abstract}

\begin{IEEEkeywords}
\noindent
\mykeywords
\end{IEEEkeywords}

\section{Introduction}
\label{s:intro}

In this note, we present novel results on controllability of the
linear discrete-time control system
\begin{equation}
\label{e:DiscreteTimeSystem}
x(t+1) =
A(t) x(t) + B(t) u(t)
\end{equation}
and of the continuous-time control system
\begin{equation}
\label{e:ContinuousTimeSystem}
\dot x(t) =
A(t) x(t) + B(t) u(t),
\end{equation}
each possibly extended by the output equation
\begin{equation}
\label{e:OutputEquation}
y(t) = C(t) x(t) + D(t) u(t).
\end{equation}
Here, the time $t$ is integer valued and real valued, respectively,
and the coefficient matrices $A(t)$, $B(t)$, $C(t)$ and $D(t)$ may be
real or complex.
If the latter matrices are all constant, the
respective systems are \begriff{time-invariant}, and otherwise they
are \begriff{time-varying}.
We remark that the same notation is used for both discrete-time and
continuous-time systems to keep the notation concise. In particular,
the symbol $t$ denotes time in both cases, and it will
always be clear from context to which of the systems
\ref{e:DiscreteTimeSystem} and \ref{e:ContinuousTimeSystem} we refer.

When such systems arise in applications, the coefficient matrices
usually depend on physical parameters and other factors. Then the
values of the entries are not known precisely, so that
system properties can not, in general, be determined with
complete certainty either.
In contrast, the nonzero pattern of the system, i.e., the locations of
the nonzero entries in its coefficient matrices, is usually completely
defined by the modeling process. That fact can be exploited to
determine \begriff{structural properties} or
\begriff{strong structural properties},
two approaches which have found wide applications. See
\cite{MayedaYamada79,Reinschke88,Murota00,DionCommaultVanderWoude03,i03diagnosis,LiuLinChen13b,LouHong12}
and the references given there.

Here we follow the strong structural approach
initiated
in
\cite{MayedaYamada79}, which
assumes that the nonzero pattern is all that is known about the
system and
seeks to characterize the patterns that guarantee
certain system properties for all choices of nonzero values at the
matrix positions defined by the pattern.
In contrast, the structural approach would
guarantee any property
only for almost all choices
\cite{Reinschke88,Murota00,DionCommaultVanderWoude03,i03diagnosis,LiuLinChen13b}.

\ifCLASSOPTIONdraftclsnofoot\else\looseness-1\fi
\person{Mayeda} and \person{Yamada} have been the first to
characterize the patterns that guarantee controllability of
time-invariant systems \cite{MayedaYamada79}.
Equivalent characterizations have been given subsequently, see
\cite{JarczykSvaricekAlt11} and the references therein, and related
problems have been investigated in
\cite{HashimotoAmemiya11,LouHong12,ChapmanMesbahi13}.
All these works consider only
time-invariant systems, despite the fact that it is often most natural to
assume that the physical parameters entering the coefficient matrices
vary over time.

In this note, we characterize the patterns that guarantee
controllability for all choices of nonzero time functions at the
matrix positions defined by the pattern, which extends the results in
\cite{MayedaYamada79} to time-varying systems.
To this end we introduce basic notation and terminology in Section
\ref{s:prelims} and review controllability results for time-invariant
systems in Section \ref{ss:ControllabilityTimeInvariant}, which
includes an algorithm to verify the conditions in the result of
\cite{MayedaYamada79}.
Patterns guaranteeing controllability for
time-varying discrete-time and continuous-time systems are
characterized in Section \ref{ss:DiscreteTimeControllability} and
\ref{ss:ContinuousTimeControllability}, respectively.
In the former case, the conditions coincide with those for
time-invariant systems if the underlying time interval is sufficiently
long, whereas in the latter case, they turn out to be more restrictive.
In Section \ref{ss:Observability} we present our results on
observability.
The conditions in our characterizations can all be
verified using the algorithm in Section
\ref{ss:ControllabilityTimeInvariant}.
Special cases of the results in the present paper have been
announced in \cite{i12str,i13str,i13strb}.

\section{Preliminaries}
\label{s:prelims}

\subsection{Basic Notation}

$\mathbb{C}$, $\mathbb{R}$ and $\mathbb{Z}$ denote the sets of complex
numbers, real numbers and integers, respectively.
$\mathbb{R}_{+}$ and $\mathbb{Z}_{+}$ denote the subsets of
non-negative elements of $\mathbb{R}$ and $\mathbb{Z}$, respectively,
and $\mathbb{N} = \mathbb{Z}_{+} \setminus \{ 0 \}$.

For $a, b \in \mathbb{R} \cup \{ \infty \}$ satisfying
$a \leq b$, the closed, open and
half-open intervals with end points $a$ and $b$ are denoted
$\intcc{a,b}$, $\intoo{a,b}$, $\intco{a,b}$, and $\intoc{a,b}$,
respectively,
e.g.
$\intco{0,\infty} = \mathbb{R}_{+}$,
and $\intco{a,b} = \emptyset$ if $a \geq b$.
$\intcc{a,b}_{\mathbb{Z}}$, $\intoo{a,b}_{\mathbb{Z}}$,
$\intco{a,b}_{\mathbb{Z}}$, and $\intoc{a,b}_{\mathbb{Z}}$ stand for
discrete intervals, e.g.
$\intcc{a,b}_{\mathbb{Z}} = \intcc{a,b} \cap \mathbb{Z}$.
We often drop the subscript ``$\mathbb{Z}$'' when
the type of interval to which we refer is obvious.

The set of $n \times m$-matrices over the field $\mathbb{F}$ is
denoted $\mathbb{F}^{n \times m}$, where
$\mathbb{F} \in \{ \mathbb{R}, \mathbb{C} \}$
throughout this note.
$X_{i,j}$ denotes the entry at position $(i,j)$ of
$X \in \mathbb{F}^{n \times m}$.
For any $x \in \mathbb{F}^n$ and $X \in \mathbb{F}^{n \times m}$,
$x^{\ast}$ and $X^{\ast}$ denote the transpose of $x$ and $X$,
respectively, if $\mathbb{F} = \mathbb{R}$, and the conjugate
transpose, if $\mathbb{F} = \mathbb{C}$.

\subsection{Systems, Solutions, Transition Matrices}
\label{ss:Systems}

The coefficient matrices in \ref{e:DiscreteTimeSystem},
\ref{e:ContinuousTimeSystem}, and \ref{e:OutputEquation},
$A(t)$, $B(t)$, $C(t)$ and $D(t)$,
are defined for all
$t \in \mathbb{Z}$ and all $t \in \mathbb{R}$, respectively. We
consider systems \begriff{over $\mathbb{R}$} and
\begriff{over $\mathbb{C}$}, so $A(t) \in \mathbb{F}^{n \times n}$,
$B(t) \in \mathbb{F}^{n \times r}$,
$C(t) \in \mathbb{F}^{m \times n}$, and
$D(t) \in \mathbb{F}^{m \times r}$
for each time $t$, where
$n \in \mathbb{N}$ and
$r, m \in \mathbb{Z}_{+}$.
The cases $r = 0$ and $m = 0$, which stand for systems without inputs
and outputs, respectively, are included here for the sake of
notational simplicity.

Given $u \colon \mathbb{Z} \to \mathbb{F}^r$, a map
$x \colon \intco{t_0,\infty}_{\mathbb{Z}} \to \mathbb{F}^n$ is a
\begriff{solution} of the system \ref{e:DiscreteTimeSystem}
(\begriff{generated} by the input signal $u$) if $t_0 \in \mathbb{Z}$
and \ref{e:DiscreteTimeSystem} holds for all
$t \in \intco{t_0,\infty}_{\mathbb{Z}}$.
Analogously, if $u \colon \mathbb{R} \to \mathbb{F}^r$, a map
$x \colon \intco{t_0,\infty} \to \mathbb{F}^n$ is a
\begriff{solution} of the system \ref{e:ContinuousTimeSystem}
(generated by $u$) if
$t_0 \in \mathbb{R}$, $x$ is absolutely continuous, and
\ref{e:ContinuousTimeSystem} holds for almost every (a.e.)
$t \in \intco{t_0,\infty}$, i.e., for all $t \in \intco{t_0,\infty}$
with the possible exception of a set of (Lebesgue) measure zero.
In the case of the system \ref{e:ContinuousTimeSystem} we will always
assume that the matrices $A$ and $B$
are locally integrable and
that input signals $u \colon \mathbb{R} \to \mathbb{F}^n$ are
measurable and locally essentially bounded. This hypothesis implies
both
existence and uniqueness of solutions \cite{Lukes82} and is
satisfied, e.g. if $A$, $B$
and $u$ are piecewise 
continuous.

The \begriff{general solution} of the system
\ref{e:DiscreteTimeSystem} and the system \ref{e:ContinuousTimeSystem}
is the map $\varphi$ defined by the requirement that for all
$x_0 \in \mathbb{F}^n$, $t_0$ and $u$,
$\varphi(\cdot,t_0,x_0,u)$ is the unique solution of
\ref{e:DiscreteTimeSystem} and \ref{e:ContinuousTimeSystem},
respectively, defined on $\intco{t_0,\infty}$ and satisfying
$\varphi(t_0,t_0,x_0,u) = x_0$.
Of course, we do not need to specify $u$ 
on the whole time axis, i.e., we define
$
\varphi(t,t_0,x_0,u|_{\intco{t_0,t}})
\defas
\varphi(t,t_0,x_0,u)
$,
where $t \geq t_0$ and $u|_{\intco{t_0,t}}$ denotes the
restriction of $u$ to the (discrete or continuous) interval
$\intco{t_0,t}$.
The map $\varphi(t,t_0,\cdot,0)$, which is linear, is called the
\begriff{transition matrix at $(t,t_0)$} of the system and is usually
denoted by $\Phi(t,t_0)$.
Then
\begin{align}
\label{e:VariationOfConstants:DiscreteTime}
\varphi(t,t_0,x_0,u)
&=
\Phi(t,t_0) x_0
+
\sum_{\tau=t_0}^{t-1}
\Phi(t,\tau+1) B(\tau) u(\tau),\\
\label{e:VariationOfConstants:ContinuousTime}
\varphi(t,t_0,x_0,u)
&=
\Phi(t,t_0) x_0
+
\int_{t_0}^t
\Phi(t,\tau) B(\tau) u(\tau)
d\tau
\end{align}
for the systems \ref{e:DiscreteTimeSystem} and
\ref{e:ContinuousTimeSystem}, respectively. We additionally have
$\Phi(t,t_0) = A(t-1) \cdot \ldots \cdot A(t_0)$ for the system
\ref{e:DiscreteTimeSystem}, and if $n = 1$ or $A$ is constant, then
$\Phi(t,t_0) = \exp\left( \int_{t_0}^t A(\tau) d \tau \right)$
for the system \ref{e:ContinuousTimeSystem}.
See \cite{Rugh96,Lukes82}.

\subsection{Nonzero Patterns and Graphs}
\label{ss:patterns}

We define the equivalence relation $\sim$ on $\mathbb{F}^{n\times m}$
by the requirement that $X \sim Y$ iff the positions of the zeros in
$X$ and $Y$ coincide, i.e., $X \sim Y$ iff $X_{i,j}=0$ implies
$Y_{i,j}=0$ and vice versa.
The equivalence classes
$[X]_{\mathalpha{\sim}} \in \mathalpha{\mathbb{F}^{n\times m}}/\mathalpha{\sim}$,
which we call \begriff{nonzero patterns}, or just \begriff{patterns},
will be represented by matrices
whose entries are asterisks and circles. Each
asterisk stands for a nonzero, and each circle, for a zero. For example, if the
coefficient matrices $A$ and $B$ in \ref{e:ContinuousTimeSystem} are
given by
\begin{equation}
\label{e:ex:POLY-SSC_but_not_ANALYTIC-SSC:Mat}
A(t)
=
\ifCLASSOPTIONdraftclsnofoot\arraycolsep1.6\arraycolsep\else\arraycolsep.6\arraycolsep\fi
\begin{pmatrix}
1 & 0 \\
0 & 0
\end{pmatrix}
\;\;\text{and}\;\;
B(t)
=
\begin{pmatrix}
\e^t \\
1
\end{pmatrix},
\end{equation}
then $[A(t)]_{\mathalpha{\sim}} = \mathcal{A}$ and $[B(t)]_{\mathalpha{\sim}} = \mathcal{B}$
for every $t$, where
\begin{equation}
\label{e:ex:POLY-SSC_but_not_ANALYTIC-SSC:Pat}
\mathcal{A}
=
\ifCLASSOPTIONdraftclsnofoot%
\arraycolsep.45\arraycolsep%
\def\arraystretch{.5}%
\else%
\arraycolsep.6\arraycolsep%
\fi
\begin{pmatrix}
\star & \circ \\
\circ & \circ
\end{pmatrix}
\;\;\text{and}\;\;
\mathcal{B}
=
\begin{pmatrix}
\star\\
\star
\end{pmatrix}.
\end{equation}
In particular,
$\mathcal{A}_{1,1} = \star$ and $\mathcal{A}_{1,2} = \circ$.
\begin{definition}
\label{def:OfPattern}
Let
$\mathcal{A} \in \mathbb{F}^{n \times n}/\mathalpha{\sim}$ and
$\mathcal{B} \in \mathbb{F}^{n \times r}/\mathalpha{\sim}$.
The system \ref{e:DiscreteTimeSystem}
\begriff{is of pattern $(\mathcal{A},\mathcal{B})$} if
\begin{equation}
\label{e:SystemOfPattern:Controllability}
A(t) \in \mathcal{A}
\text{\;\;and\;\;}
B(t) \in \mathcal{B}
\end{equation}
for all $t \in \mathbb{Z}$. Analogously,
the system
\ref{e:ContinuousTimeSystem} 
is of pattern $(\mathcal{A},\mathcal{B})$ if
$A$ and $B$
are locally integrable and
\ref{e:SystemOfPattern:Controllability}
holds for almost every $t \in \mathbb{R}$.
\end{definition}
\label{rev1:point1}
Some remarks are in order.
First of all, we would like to emphasize that 
the coefficient matrices
are required to satisfy the condition
\ref{e:SystemOfPattern:Controllability} (almost) everywhere on the
whole time axis for the sake of simplicity only. In any of our
subsequent results, these matrices actually need to be defined only on
$\intcc{t_0,t_1}$.
Still, there do exist systems that are not of any pattern, and their
controllability can not be decided from the results in the present
paper.
On the other hand, our notion of `pattern' is general enough to allow
nonzero matrix entries to change their signs. The sets of zeros of
such entries would be empty in the discrete-time case, but could be
non-empty of measure zero in the continuous-time case, where
the condition \ref{e:SystemOfPattern:Controllability}
is required to hold for almost everywhere $t$ rather than
for every $t$ and
Definition \ref{def:OfPattern} does not impose any continuity
requirements.
For example, a polynomial matrix entry $\mathbb{R} \to \mathbb{R}$ is
considered a nonzero entry iff it does not vanish identically on
$\mathbb{R}$.

The operations of addition, matrix composition and transposition for
patterns are defined by
\begin{align*}
[ X ]_{\mathalpha{\sim}} + [ Y ]_{\mathalpha{\sim}}
&=
[ |X| + |Y| ]_{\mathalpha{\sim}},\\
(
[ X ]_{\mathalpha{\sim}}, [ Y ]_{\mathalpha{\sim}}
)
&=
[
(X, Y)
]_{\mathalpha{\sim}},\\
[ X ]_{\mathalpha{\sim}}^{\ast}
&=
[ X^{\ast} ]_{\mathalpha{\sim}},
\end{align*}
whenever the operations on the right hand sides are defined.
Here,
$X$ and $Y$ are matrices,
$|X|$ denotes the matrix with entries $|X_{i,j}|$, and
$(X,Y)$ is the matrix consisting of the columns of
$X$ and $Y$.

Patterns of systems are conveniently represented by
graphs \cite{Reinschke88,Murota00,DionCommaultVanderWoude03}.
Specifically, if
$\mathcal{A} \in \mathbb{F}^{n \times n}/\mathalpha{\sim}$ and
$\mathcal{B} \in \mathbb{F}^{n \times r}/\mathalpha{\sim}$,
then the \begriff{graph} $\mathcal{G}(\mathcal{A},\mathcal{B})$ of
$(\mathcal{A},\mathcal{B})$ has vertices $1, \dots, n+r$, and there
is a (directed) edge from the vertex $v$ to the vertex $w$ if
$1 \leq w \leq n$ and $(\mathcal{A},\mathcal{B})_{w,v} = \star$.
In this case, $v$ is a \begriff{predecessor} of $w$, and $w$ is a
\begriff{successor} of $v$.
For any set $V$ of vertices,
$\Pre(V)$ denotes the set of predecessors of $V$, i.e.,
$v \in \Pre(V)$ if there exists an edge from $v$ to some vertex in
$V$.
Analogously, $\Post(V)$ denotes the set of successors of $V$.
The notations $\Pre(V)$ and $\Post(V)$ do not contain any reference to
the graph $\mathcal{G}(\mathcal{A},\mathcal{B})$, which will always be
clear from context.

\section{Controllability}
\label{s:MainResults}

In this section we present novel characterizations of controllability of
time-varying systems
in the strong structural sense.
We rely on controllability %
notions from \cite{Sontag98} throughout; for variants of
controllability concepts, see
e.g. \cite{KalmanHoNarendra63,CallierDesoer94,Rugh96}.

\begin{definition}
\label{def:Controllability}
Let $\Sigma$ denote
the discrete-time system \ref{e:DiscreteTimeSystem} or
the continuous-time system \ref{e:ContinuousTimeSystem}, assume
$t_0, t_1 \in \mathbb{Z}$ or $t_0, t_1 \in \mathbb{R}$, respectively,
and let $\varphi$ denote the general solution of $\Sigma$.

The pair $(t_0,x_0)$ \begriff{can be controlled to} the
pair $(t_1,x_1)$ if $x_0, x_1 \in \mathbb{F}^n$,
$t_0 \leq t_1$, and there exists a control input
$u \colon \intco{t_0,t_1} \to \mathbb{F}^r$ such that
$x_1 = \varphi(t_1, t_0, x_0, u)$.
The system $\Sigma$ is
\begriff{controllable on the interval \intcc{t_0,t_1}}
if $(t_0,x_0)$ can be controlled to $(t_1,x_1)$
for all $x_0, x_1 \in \mathbb{F}^n$, and
$\Sigma$ is
\begriff{controllable}
if for all $x_0, x_1 \in \mathbb{F}^n$ there
exist $\tau_0$ and $\tau_1$ such that
$(\tau_0,x_0)$ can be controlled to $(\tau_1,x_1)$.
\end{definition}

We will frequently need the well-known controllability criteria in
Proposition \ref{prop:Controllability} below, which follow
immediately from the formulas \ref{e:VariationOfConstants:DiscreteTime}
and \ref{e:VariationOfConstants:ContinuousTime}.
Here and in
the remainder of this note, $\Phi$ denotes the
transition matrix of the systems \ref{e:DiscreteTimeSystem} and
\ref{e:ContinuousTimeSystem}, in which it will always be clear from
context to which of the two systems we refer.

\begin{proposition}
\label{prop:Controllability}
Let $t_0, t_1 \in \mathbb{Z}$, $t_0 < t_1$.
Then the system \ref{e:DiscreteTimeSystem} is controllable on
$\intcc{t_0,t_1}$ iff the condition
\begin{equation}
\label{e:prop:Controllability:DiscreteTime}
p^{\ast} \Phi(t_1,\tau+1) B(\tau) = 0
\text{\ for every $\tau \in \intco{t_0,t_1}$}
\end{equation}
implies $p = 0$. Analogously, if 
$t_0, t_1 \in \mathbb{R}$, $t_0 < t_1$,
then the system \ref{e:ContinuousTimeSystem} is controllable on
$\intcc{t_0,t_1}$ iff the condition
\begin{equation}
\label{e:prop:Controllability:ContinuousTime}
p^{\ast} \Phi(t_1,\tau) B(\tau) = 0
\text{\ for a.e. $\tau \in \intcc{t_0,t_1}$}
\end{equation}
implies $p = 0$.
\end{proposition}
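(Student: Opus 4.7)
The plan is to reduce controllability on $\intcc{t_0,t_1}$ to the surjectivity of a linear map and then characterize surjectivity via the trivial orthogonal complement of its range. In both cases, the formulas \ref{e:VariationOfConstants:DiscreteTime} and \ref{e:VariationOfConstants:ContinuousTime} show that the reachable set from $x_0$ at time $t_1$ equals $\Phi(t_1,t_0)x_0$ plus the image of the input-to-state map. So $\Sigma$ is controllable on $\intcc{t_0,t_1}$ if and only if, for every $z \in \mathbb{F}^n$, there exists an admissible $u$ with $z = L u$, where $L$ denotes the respective linear map
\begin{align*}
L_{\mathrm{d}} u &= \sum_{\tau=t_0}^{t_1-1} \Phi(t_1,\tau+1) B(\tau) u(\tau),\\
L_{\mathrm{c}} u &= \int_{t_0}^{t_1} \Phi(t_1,\tau) B(\tau) u(\tau)\, d\tau,
\end{align*}
acting on discrete-time sequences and on the space of measurable, locally essentially bounded inputs, respectively.

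Next, I would invoke the standard duality: a linear map into the finite-dimensional space $\mathbb{F}^n$ is surjective if and only if there is no nonzero $p \in \mathbb{F}^n$ orthogonal to its entire range, i.e., satisfying $p^{\ast} L u = 0$ for all admissible $u$. For the discrete-time map $L_{\mathrm{d}}$, the range equals the column span of the block matrix $\bigl( \Phi(t_1,t_0+1) B(t_0) \;\; \cdots \;\; \Phi(t_1,t_1) B(t_1-1) \bigr)$, so $p^{\ast} L_{\mathrm{d}} u = 0$ for every $u$ reduces immediately to \ref{e:prop:Controllability:DiscreteTime} by testing against the standard basis inputs $u = e_i \delta_{\tau_0}$; conversely, if \ref{e:prop:Controllability:DiscreteTime} holds, then $p^{\ast} L_{\mathrm{d}} u = 0$ for every $u$ by linearity. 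This yields the discrete-time equivalence.

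For the continuous-time map $L_{\mathrm{c}}$, the analogous argument requires a bit more care: if $p^{\ast} \Phi(t_1,\tau) B(\tau) = 0$ for a.e.~$\tau$, then $p^{\ast} L_{\mathrm{c}} u = 0$ for every admissible $u$, which blocks surjectivity; conversely, if $p^{\ast} L_{\mathrm{c}} u = 0$ for every admissible $u$, then choosing $u(\tau) = B(\tau)^{\ast} \Phi(t_1,\tau)^{\ast} p$ (which is locally essentially bounded on $\intcc{t_0,t_1}$ because $A, B$ are locally integrable and hence so is $\Phi(t_1,\cdot)$) gives $\int_{t_0}^{t_1} \lvert p^{\ast} \Phi(t_1,\tau) B(\tau) \rvert^2 \, d\tau = 0$, so the integrand vanishes a.e.\ This establishes the continuous-time equivalence.

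The only real subtlety is the continuous-time direction, where one must justify that the chosen test input $u$ is admissible in the class fixed in Section \ref{ss:Systems}; this is where local integrability of $A$ and $B$, together with the fact that transition matrices inherit local boundedness from local integrability of $A$, does the work. Everything else is linear-algebra duality and direct substitution into the variation-of-constants formulas.
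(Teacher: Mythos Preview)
The paper does not supply a proof of this proposition; it merely states that the criteria ``follow immediately from the formulas \ref{e:VariationOfConstants:DiscreteTime} and \ref{e:VariationOfConstants:ContinuousTime}.'' Your argument is precisely the standard one the paper has in mind, and the discrete-time part is fine.

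There is, however, a small but genuine gap in your continuous-time direction. You choose the test input $u(\tau) = B(\tau)^{\ast} \Phi(t_1,\tau)^{\ast} p$ and assert it is locally essentially bounded ``because $A, B$ are locally integrable and hence so is $\Phi(t_1,\cdot)$.'' Local integrability of $B$ does \emph{not} imply that $B$ is locally essentially bounded, so under the standing hypotheses of Section~\ref{ss:Systems} this $u$ need not be admissible. (The transition matrix $\Phi(t_1,\cdot)$ is continuous, hence bounded on $\intcc{t_0,t_1}$, so that factor is harmless; the problem is $B$.) The fix is easy and standard: instead of that particular $u$, test against inputs of the form $u = e_i \mathbf{1}_E$ with $E \subseteq \intcc{t_0,t_1}$ measurable and $e_i$ a standard basis vector of $\mathbb{F}^r$. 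These are bounded, hence admissible, and the vanishing of $\int_E p^{\ast} \Phi(t_1,\tau) B(\tau) e_i \, d\tau$ for all such $E$ and $i$ forces $p^{\ast} \Phi(t_1,\tau) B(\tau) = 0$ for a.e.\ $\tau$, since the integrand is integrable. With this correction your proof is complete and in line with what the paper asserts.
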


\subsection{Controllability of Time-Invariant Systems}
\label{ss:ControllabilityTimeInvariant}

Next, we review results for time-invariant systems.
If the systems \ref{e:DiscreteTimeSystem} and
\ref{e:ContinuousTimeSystem} are time-invariant,
the property of controllability of the
system \ref{e:DiscreteTimeSystem} (resp., the system
\ref{e:ContinuousTimeSystem}) on $\intcc{t_0,t_1}$ does not depend on
the actual times $t_0$ and $t_1$, provided that $t_0 + n \leq t_1$
(resp., $t_0 < t_1$). Moreover, the characterizations of
controllability in terms of the pair $(A(0),B(0))$ of matrices in the
discrete-time and the continuous-time case coincide. It is therefore
justified to call the pair $(A(0),B(0))$ \begriff{controllable} if the
time-invariant system \ref{e:DiscreteTimeSystem}, or, equivalently,
the time-invariant system \ref{e:ContinuousTimeSystem}, is so.
One of the well-known results for time-invariant systems, the
\begriff{Hautus criterion}, says that the pair
$(A,B) \in \mathbb{F}^{n \times n} \times \mathbb{F}^{n \times r}$
is controllable iff the (complex) matrix
\begin{equation}
\label{e:prop:LTIControllability:Hautus}
(\lambda \id - A, B ) \text{ is surjective}
\end{equation}
for every $\lambda \in \mathbb{C}$, where $\id$ denotes
the identity matrix.

The following result of \person{Mayeda} and \person{Yamada}
characterizes patterns that guarantee the controllability of
pairs of matrices. It has originally been established in
\cite{MayedaYamada79} under the additional assumption of input
accessibility, a minor restriction which has been removed in
\cite{i12str}.
In what follows,
we assume that
$\mathcal{A} \in \mathbb{F}^{n \times n} / \mathalpha{\sim}$
and
$\mathcal{B} \in \mathbb{F}^{n \times r} / \mathalpha{\sim}$,
unless specified otherwise.

\begin{theorem}
\label{th:MayedaYamada79}
Consider the conditions \ref{th:DiscreteTime:Main:GRAPH_zero} and
\ref{th:DiscreteTime:Main:GRAPH_nonzero} below.
\begin{enumerate}
\makeatletter
\def\theenumi{$G_\arabic{enumi}$}
\def\labelenumi{\textbf{\boldmath(\theenumi)}}
\def\p@enumi#1{(#1)}
\makeatother
\setcounter{enumi}{\theGraphConditionCtr}
\item
\label{th:DiscreteTime:Main:GRAPH_zero}
For every non-empty subset $V \subseteq \{1, \dots, n\}$
of vertices of $\mathcal{G}(\mathcal{A},\mathcal{B})$ there exists
a vertex $v \in \{1, \dots, n + r \}$ such that
$V \cap \Post(\{v\})$ is a singleton.
\item
\label{th:DiscreteTime:Main:GRAPH_nonzero}
For every non-empty subset $V \subseteq \{1, \dots, n\}$
of vertices of $\mathcal{G}(\mathcal{A},\mathcal{B})$ that satisfies
$V \subseteq \Pre(V)$ there exists
a vertex $v \in \{1, \dots, n + r \} \setminus V$ such that
$V \cap \Post(\{v\})$ is a singleton.
\setcounter{GraphConditionCtr}{\value{enumi}}
\end{enumerate}
\label{revitemWordOrder}
The condition \ref{th:DiscreteTime:Main:GRAPH_zero} is equivalent to
the requirement that \ref{e:prop:LTIControllability:Hautus} holds
for $\lambda = 0$ and every pair $(A,B)$ of pattern
$(\mathcal{A},\mathcal{B})$.
Analogously, the condition \ref{th:DiscreteTime:Main:GRAPH_nonzero}
is equivalent to the requirement that \ref{e:prop:LTIControllability:Hautus} holds
for every $\lambda \in \mathbb{C} \setminus \{ 0 \}$
and every pair $(A,B)$ of pattern $(\mathcal{A},\mathcal{B})$.
Thus, every pair $(A,B)$ of pattern $(\mathcal{A},\mathcal{B})$ is controllable iff both
\ref{th:DiscreteTime:Main:GRAPH_zero} and
\ref{th:DiscreteTime:Main:GRAPH_nonzero} hold.
\end{theorem}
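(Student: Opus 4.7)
The plan is to prove the two stated equivalences separately; the concluding biconditional about controllability of every pair of the pattern then follows at once by applying \ref{e:prop:LTIControllability:Hautus} at $\lambda = 0$ and at every $\lambda \in \mathbb{C} \setminus \{ 0 \}$. Each equivalence decomposes into a direct direction (graph condition forces the Hautus property for every realization) and a contrapositive direction (failure of the graph condition allows explicit construction of $(A,B)$ in the pattern class at which the Hautus property fails at the relevant eigenvalue).

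For the equivalence involving \ref{th:DiscreteTime:Main:GRAPH_zero}, observe that \ref{e:prop:LTIControllability:Hautus} at $\lambda = 0$ is equivalent to $p^\ast(A,B) \neq 0$ for every nonzero $p$. In the direct direction, given nonzero $p$, let $V \subseteq \{ 1,\dots,n \}$ be the support of $\bar p$, apply \ref{th:DiscreteTime:Main:GRAPH_zero} to obtain $v$ with $V \cap \Post(\{v\}) = \{ w_0 \}$, and note that the $v$-th coordinate of $p^\ast(A,B)$ collapses to the single product $\bar p_{w_0}(A,B)_{w_0,v}$, nonzero as both factors are. In the contrapositive direction, take a witness $V$ for failure of \ref{th:DiscreteTime:Main:GRAPH_zero}, set $\bar p_i = 1$ on $V$ and $0$ elsewhere, and in each column $v$ assign the entries of $(A,B)$ indexed by $V \cap \Post(\{v\})$ (whose cardinality is either $0$ or at least $2$ by assumption) nonzero values summing to zero, e.g.\ $1,\dots,1,-(k-1)$; all remaining $\star$-positions are filled with arbitrary nonzero values, producing a realization with $p^\ast(A,B) = 0$.

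For the equivalence involving \ref{th:DiscreteTime:Main:GRAPH_nonzero}: failure of \ref{e:prop:LTIControllability:Hautus} at some $\lambda \neq 0$ for some $(A,B)$ means $p^\ast A = \lambda p^\ast$ and $p^\ast B = 0$ for some nonzero $p$. The key extra observation is that the support $V$ of $\bar p$ automatically satisfies $V \subseteq \Pre(V)$, because for $v \in V$ the equation $\sum_i \bar p_i A_{iv} = \lambda \bar p_v \neq 0$ forces some $i \in V$ with edge $v \to i$. In the direct direction, apply \ref{th:DiscreteTime:Main:GRAPH_nonzero} to this $V$ to obtain $v \notin V$ with $V \cap \Post(\{v\}) = \{ w_0 \}$; the $v$-th coordinate of $p^\ast(\lambda \id - A, B)$ then reduces to $\pm \bar p_{w_0}(A,B)_{w_0,v} \neq 0$, since the would-be diagonal contribution $\lambda \bar p_v$ vanishes precisely because $v \notin V$, a contradiction. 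In the contrapositive direction, a witness $V$ for failure of \ref{th:DiscreteTime:Main:GRAPH_nonzero} yields a bad realization by taking $\lambda = 1$ and $\bar p_i = 1$ on $V$ and solving, column by column, $\sum_{i \in V \cap \Post(\{v\})}(A,B)_{iv} = \bar p_v$ with all involved entries nonzero: for $v \in V$ the sum is nonempty (because $V \subseteq \Pre(V)$), and for $v \notin V$ it is either empty or has at least two terms, so the targets $1$ and $0$ are both attainable with all entries nonzero.

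The principal obstacle I anticipate is not conceptual but bookkeeping: in each contrapositive construction one must verify that every $\star$-position of $(A,B)$ can be simultaneously filled with a nonzero value while the prescribed linear constraints are satisfied. The single conceptual subtlety is the role of the restriction $v \notin V$ in \ref{th:DiscreteTime:Main:GRAPH_nonzero}; without it, the diagonal term $\lambda \bar p_v$ could cancel the distinguished product $\bar p_{w_0}(A,B)_{w_0,v}$ and destroy the contradiction. Since at $\lambda = 0$ there is no diagonal term to worry about, the analogous restriction is absent from \ref{th:DiscreteTime:Main:GRAPH_zero}, which accounts for the asymmetry between the two conditions.
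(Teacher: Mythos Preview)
Your argument is correct. The paper itself does not prove Theorem~\ref{th:MayedaYamada79}; it is quoted as a known result, attributed to \cite{MayedaYamada79} (with the input-accessibility hypothesis later removed in \cite{i12str}), and the reader is referred to \cite[Section~III]{i12str} for a proof. So there is no in-paper proof to compare against.

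That said, your column-by-column approach---taking $V$ to be the support of a putative left kernel vector in the direct direction, and building a bad realization one column at a time in the contrapositive direction---is exactly the standard argument underlying those references, and your explanation of why the restriction $v \notin V$ appears in \ref{th:DiscreteTime:Main:GRAPH_nonzero} but not in \ref{th:DiscreteTime:Main:GRAPH_zero} is the right one. Two very minor remarks on the bookkeeping you flagged: in the contrapositive for \ref{th:DiscreteTime:Main:GRAPH_nonzero}, when $v \in V$ and $|V \cap \Post(\{v\})| = k \geq 2$, be careful that a naive choice like $1,\dots,1,2-k$ fails at $k=2$; any assignment such as $2,-1$ (or $1/k,\dots,1/k$) works since $\mathbb{F} \in \{\mathbb{R},\mathbb{C}\}$. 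Also, your ``$\bar p_v$'' on the right-hand side of the column constraint should be read as $0$ for input columns $v > n$, which is consistent with how you then treat that case.
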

\begin{figure}[t]
{%
\newcommand{\INPUT}{\item[\textbf{Input:}]}%
\newcommand{\OUTPUT}{\item[\textbf{Output:}]}%
\begin{algorithmic}[1]
\INPUT{$L$, $\mathcal{G}(\mathcal{A},\mathcal{B})$}
\REQUIRE {%
$L \in \{ 0, 1 \}$,
$\mathcal{A} \in \mathbb{F}^{n \times n}/\mathalpha{\sim}$,
$\mathcal{B} \in \mathbb{F}^{n \times r}/\mathalpha{\sim}$,
$n \in \mathbb{N}$,
$r \in \mathbb{Z}_{+}$%
}%
\STATE $V \defas \{ 1, \dots, n \}$
\WHILE {$V \not= \emptyset$}
\STATE {
\label{alg:StrongStructuralControllability:T1}
$T \defas \Menge{ v \in \Pre(V) }{ V \cap \Post(\{v\}) \text{ is a singleton}}$
\label{revcomplexstatement}
}
\IF {$L = 1$}
\STATE {
\label{alg:StrongStructuralControllability:T2}
$T \defas T \setminus V$
}
\ENDIF
\IF {$L = 0$ \OR $V \subseteq \Pre(V)$}
\IF {$T = \emptyset$}
\STATE {break} {\hspace{1em}\textit{// exit while loop}}
\label{revitemNoLine9alg}
\ENDIF
\STATE {
\label{alg:StrongStructuralControllability:Pick1}
Pick $v \in T$
}
\STATE {
\label{alg:StrongStructuralControllability:Remove1}
$V \defas V \setminus \Post(\{v\})$
}
\ELSE
\STATE {Pick $v \in V \setminus \Pre(V)$}
\STATE {
\label{alg:StrongStructuralControllability:Remove2}
$V \defas V \setminus \{ v \}$
}
\ENDIF
\ENDWHILE
\OUTPUT{$V$}
\end{algorithmic}%
}%
\caption{\label{alg:StrongStructuralControllability}
Algorithm for the verification of the condition
\ref{th:DiscreteTime:Main:GRAPH_zero} (if $L = 0$) and the condition
\ref{th:DiscreteTime:Main:GRAPH_nonzero} (if $L = 1$) in Theorem
\ref{th:MayedaYamada79}. It returns the empty set if the respective
condition holds, or a nonempty set $V$ for which the condition fails
to hold.}%
\end{figure}

The conditions \ref{th:DiscreteTime:Main:GRAPH_zero} and
\ref{th:DiscreteTime:Main:GRAPH_nonzero} can be verified using the
algorithm in \ref{alg:StrongStructuralControllability}, whose
correctness is immediate from the proof of the above
Theorem given in \cite[Section III]{i12str}.
We emphasize that the number of iterations performed by the algorithm
does not exceed the state space dimension $n$. Hence, despite the fact
that the conditions \ref{th:DiscreteTime:Main:GRAPH_zero} and
\ref{th:DiscreteTime:Main:GRAPH_nonzero} impose requirements on every
non-empty subset $V \subseteq \{1,\dots,n\}$, only at most $2n$ such
subsets are actually tested in the verification of
\ref{th:DiscreteTime:Main:GRAPH_zero} and
\ref{th:DiscreteTime:Main:GRAPH_nonzero}.
We remark in passing that the conditions \ref{th:DiscreteTime:Main:GRAPH_zero} and
\ref{th:DiscreteTime:Main:GRAPH_nonzero} are equivalent to
the possibility of transforming, through row
and column permutations, the patterns $(\mathcal{A},\mathcal{B})$ and
$( [ \id ]_{\mathalpha{\sim}} + \mathcal{A},\mathcal{B})$
into special forms.
See \cite{JarczykSvaricekAlt11,i12str}.

\begin{example}
\label{ex:Poljak92mod:Alg}
Consider the patterns $\mathcal{A}$ and $\mathcal{B}$ given by
\begin{equation}
\label{e:ex:Poljak92mod:pattern}
\mathcal{A}
=
\ifCLASSOPTIONdraftclsnofoot%
\arraycolsep.45\arraycolsep%
\def\arraystretch{.5}%
\else%
\arraycolsep.6\arraycolsep%
\fi
\begin{pmatrix}
\circ & \star  & \circ & \circ & \circ & \circ \\
 \star  & \circ & \circ & \circ & \circ & \circ \\
 \circ & \circ & \circ & \star  & \circ & \circ \\
 \circ & \circ & \circ & \circ & \star  & \circ \\
 \circ & \circ & \circ & \circ & \circ & \star  \\
 \circ & \circ & \circ & \circ & \circ & \circ
\end{pmatrix},
\;\;\;
\mathcal{B}
=
\begin{pmatrix}
\star  & \star  \\
 \star  & \circ \\
 \circ & \circ \\
 \star  & \circ \\
 \circ & \circ \\
 \circ & \star
\end{pmatrix}.
\end{equation}
See also \ref{fig:Poljak92mod_GAB}.
This example is a modification of the one in \cite{Poljak92}.
\begin{figure}[t]
\centering 
\pgfdeclareimage[width=.99\linewidth]{Poljak92mod_GAB}{figures/Poljak92modGAB}%
\psfrag{1}[][]{1}
\psfrag{2}[][]{2}
\psfrag{3}[][]{3}
\psfrag{4}[][]{4}
\psfrag{5}[][]{5}
\psfrag{6}[][]{6}
\psfrag{7}[][]{7}
\psfrag{8}[][]{8}
\noindent
\parbox{.5\linewidth}{\pgfuseimage{Poljak92mod_GAB}}%
\caption{\label{fig:Poljak92mod_GAB}
Graph $\mathcal{G}{(\mathcal{A},\mathcal{B})}$ investigated in Example
\ref{ex:Poljak92mod:Alg}.%
}
\end{figure}
In order to verify that every pair $(A,B)$ of pattern
$(\mathcal{A},\mathcal{B})$ is
controllable, we first apply the algorithm in
\ref{alg:StrongStructuralControllability} with the parameter $L=0$.
Initially we have $V = \{1,\dots,6\}$, and $T$ is assigned the
value $\{1,2,4,5,6\}$ on line
\ref{alg:StrongStructuralControllability:T1}, which corresponds to the
columns of $(\mathcal{A},\mathcal{B})$ that contain exactly one
nonzero entry.
On line \ref{alg:StrongStructuralControllability:Pick1} we may choose
$v=1$, which results in the vertex $2$ being removed from $V$ on
line \ref{alg:StrongStructuralControllability:Remove1},
and vertices $1$, $3$, $4$, $5$ and $6$ may subsequently be
removed from $V$ on line
\ref{alg:StrongStructuralControllability:Remove1}, in this order.
Then $V = \emptyset$ on termination, and the condition
\ref{th:DiscreteTime:Main:GRAPH_zero} is satisfied.

Next, we apply the algorithm with the parameter $L=1$ to verify the
condition \ref{th:DiscreteTime:Main:GRAPH_nonzero}.
Then $T = \emptyset$ on line
\ref{alg:StrongStructuralControllability:T2}, and $V \setminus
\Pre(V)$ equals $\{ 3 \}$, which corresponds to the vanishing
third column of $\mathcal{A}$.
Subsequently, the vertices $3$, $4$, $5$, $6$ are removed
from $V$ on line \ref{alg:StrongStructuralControllability:Remove2},
then vertex $1$ is removed from $V$ on line
\ref{alg:StrongStructuralControllability:Remove1}. Finally, vertex
$2$ is removed from $V$ on line
\ref{alg:StrongStructuralControllability:Remove2}, so we arrive at
$V = \emptyset$ again. Hence, by Theorem
\ref{th:MayedaYamada79}, every pair $(A,B)$ of pattern
$(\mathcal{A},\mathcal{B})$ is controllable, regardless of the actual
numerical values at the nonzero locations in $A$ and $B$.
\end{example}

\subsection{Controllability of Discrete-Time Time-Varying Systems}
\label{ss:DiscreteTimeControllability}

We are now prepared to present and to prove our main result for
discrete-time systems.

\begin{theorem}
\label{th:DiscreteTime:Main}
Let
$t_0, t_1 \in \mathbb{Z}$, $t_0 < t_1$, and
consider the following condition.
\begin{enumerate}
\makeatletter
\def\theenumi{$G_\arabic{enumi}$}
\def\labelenumi{\textbf{\boldmath(\theenumi)}}
\def\p@enumi#1{(#1)}
\makeatother
\setcounter{enumi}{\theGraphConditionCtr}
\item
\label{th:DiscreteTime:Main:ShortInterval:GRAPH}
For every non-empty subset
$V \subseteq \{1, \dots, n(t_1 - t_0)\}$
of vertices of $\mathcal{G}(\mathcal{K})$ there exists
some vertex $v \in \{1, \dots, (n + r)(t_1 - t_0) \}$ such that $V$
contains exactly one successor of $v$ in
$\mathcal{G}(\mathcal{K})$, where
the pattern
$\mathcal{K} \in \mathbb{F}^{n(t_1-t_0) \times (n+r)(t_1-t_0)}/\mathalpha{\sim}$
is defined by
\begin{align*}
\mathcal{K}
&=
\ifCLASSOPTIONdraftclsnofoot%
\arraycolsep.45\arraycolsep%
\def\arraystretch{.5}%
\else%
\arraycolsep.6\arraycolsep%
\fi
\left.
\begin{pmatrix}
[0]_{\mathalpha{\sim}} & [\id]_{\mathalpha{\sim}} &        &              & \mathcal{B} &             &        &\\
         & \mathcal{A} & \ddots &              &             & \mathcal{B} &        &\\
         &             & \ddots & [\id]_{\mathalpha{\sim}}  &             &             & \ddots &\\
         &             &        &  \mathcal{A} &             &             & \phantom{\ddots}& \mathcal{B}
\end{pmatrix}
\right\}
n(t_1 - t_0)
\\[-3ex]
&\hphantom{=}
\ifCLASSOPTIONdraftclsnofoot%
\arraycolsep.45\arraycolsep%
\def\arraystretch{.5}%
\else%
\arraycolsep.6\arraycolsep%
\fi
\hspace*{1.1em}
\left.
\underbrace{%
\hphantom{%
\begin{matrix}
[0]_{\mathalpha{\sim}}
\end{matrix}}}_{n}
\underbrace{%
\hphantom{%
\begin{matrix}
[\id]_{\mathalpha{\sim}} & \ddots & [\id]_{\mathalpha{\sim}} &
\end{matrix}}}_{n(t_1 - t_0 - 1)}
\underbrace{%
\hphantom{%
\begin{matrix}
\mathcal{B}& \mathcal{B} & \ddots & \mathcal{B}&
\end{matrix}}}_{r(t_1 - t_0)}
\right.
\end{align*}
and the unspecified positions in $\mathcal{K}$ are occupied by
patterns $[0]_{\mathalpha{\sim}}$ of suitable sizes.
\setcounter{GraphConditionCtr}{\value{enumi}}
\end{enumerate}
Then the following holds.
\begin{enumerate}
\item
\label{th:DiscreteTime:Main:i}
Every system \ref{e:DiscreteTimeSystem}
of pattern $(\mathcal{A},\mathcal{B})$
is controllable on $\intcc{t_0,t_1}$ iff the condition
\ref{th:DiscreteTime:Main:ShortInterval:GRAPH} holds.
\item
\label{th:DiscreteTime:Main:ii}
If additionally $t_0 + n \leq t_1$, then every system
\ref{e:DiscreteTimeSystem} of pattern $(\mathcal{A},\mathcal{B})$
is controllable on $\intcc{t_0,t_1}$ iff
every time-invariant system \ref{e:DiscreteTimeSystem}
of pattern $(\mathcal{A},\mathcal{B})$ is so.
\popQED
\end{enumerate}
\end{theorem}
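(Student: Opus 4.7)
My plan is to prove part~(i) by reducing controllability on $\intcc{t_0,t_1}$ to the full-row-rank property of any numerical realization of $\mathcal{K}$ and then applying Theorem~\ref{th:MayedaYamada79}; and to prove part~(ii) by a combinatorial comparison of the resulting graph conditions, exploiting $t_1 - t_0 \ge n$.

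For part~(i), I combine Proposition~\ref{prop:Controllability} with the discrete-time identity $\Phi(t_1,\tau+1) = A(t_1-1)\cdots A(\tau+1)$ to recast controllability on $\intcc{t_0,t_1}$ as full row rank of the reachability matrix $R := [\Phi(t_1,t_0+1)B(t_0),\ldots,B(t_1-1)]$. Setting $N := t_1 - t_0$, for any numerical realization $K$ of $\mathcal{K}$ and any $p = (p_1,\ldots,p_N) \in \mathbb{F}^{nN}$, the identity blocks of $K$ force $p^{*}K = 0$ to cascade into $p_k = -(A^{(k)})^{*}p_{k+1}$ for $k=1,\ldots,N-1$, and the input-block conditions $(B^{(k-1)})^{*}p_k = 0$ then reduce precisely to $R^{*}p_N = 0$; since $p_N = 0$ propagates back to $p = 0$, the left nullspaces of $K$ and $R$ share the same triviality. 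Splitting $\mathcal{K}$ into a pair $(\mathcal{A}',\mathcal{B}')$ by taking its leftmost $nN$ columns as $\mathcal{A}'$ (which yields the same graph $\mathcal{G}(\mathcal{A}',\mathcal{B}') = \mathcal{G}(\mathcal{K})$), Theorem~\ref{th:MayedaYamada79} then gives that \ref{th:DiscreteTime:Main:ShortInterval:GRAPH} is equivalent to every realization of $\mathcal{K}$ having full row rank, completing part~(i).

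For the forward direction of part~(ii), the claim is immediate since every constant system of pattern $(\mathcal{A},\mathcal{B})$ is in particular time-varying. Combinatorially, this corresponds to lifting witnesses: any witness $V_0 \subseteq \{1,\ldots,n\}$ for the failure of \ref{th:DiscreteTime:Main:GRAPH_zero} lifts to $V := V_0 \times \{N\}$ in $\mathcal{G}(\mathcal{K})$, and any witness for the failure of \ref{th:DiscreteTime:Main:GRAPH_nonzero} lifts to $V := V_0 \times \{1,\ldots,N\}$, where the edges from state column $(i,s)$ to row vertex $(i,s)$ provided by the identity blocks, combined with $V_0 \subseteq \Pre(V_0)$, force $|V \cap \Post((i,s))| \ge 2$ whenever $i \in V_0$.

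The main obstacle is the reverse direction of part~(ii). My plan is contrapositive: starting from a witness $V$ of the failure of \ref{th:DiscreteTime:Main:ShortInterval:GRAPH} with slices $V_t := \{i : (i,t) \in V\}$, the column constraints yield three facts: (a)~$|V_t \cap \Post(j+n)| \ne 1$ for every $t$ and input $j$; (b)~$V_s \subseteq \Pre(V_{s+1})$ for every $s \in \{1,\ldots,N-1\}$, so the non-empty slices form a tail $V_{t^{*}},\ldots,V_N$; and (c)~$|V_{s+1} \cap \Post(i)| \ne 1$ for every state $i \notin V_s$. When $t^{*} \ge 2$, the slice $V_{t^{*}}$ itself witnesses the failure of \ref{th:DiscreteTime:Main:GRAPH_zero}, because $V_{t^{*}-1} = \emptyset$ forces (c) to apply to every state vertex while (a) handles inputs. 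The hard case $t^{*} = 1$, with all slices non-empty, is where the hypothesis $N \ge n$ enters: my plan is to combine (b) with a pigeonhole argument on the sequence $V_1,\ldots,V_N$ of non-empty subsets of the $n$-element set to extract a stable $V_0 \subseteq \Pre(V_0)$ for which (a) and (c) imply the failure of \ref{th:DiscreteTime:Main:GRAPH_nonzero}. The main technical hurdle is the boundary sub-case $N = n$, in which neither size-monotonicity of the slices nor a pigeonholed $\mathcal{A}$-cycle is guaranteed; a careful layer-by-layer simulation of the execution of the algorithm in~\ref{alg:StrongStructuralControllability} on $\mathcal{G}(\mathcal{A},\mathcal{B})$ (which terminates in at most $n$ steps and succeeds under \ref{th:DiscreteTime:Main:GRAPH_zero} and \ref{th:DiscreteTime:Main:GRAPH_nonzero}) within $\mathcal{G}(\mathcal{K})$ will be needed to close the argument.
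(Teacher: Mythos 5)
Your part~(ii) is where the real problem lies. You reduce the nontrivial direction (via part~(i) and Theorem~\ref{th:MayedaYamada79}) to the purely combinatorial implication ``\ref{th:DiscreteTime:Main:GRAPH_zero} and \ref{th:DiscreteTime:Main:GRAPH_nonzero} imply \ref{th:DiscreteTime:Main:ShortInterval:GRAPH} whenever $t_1-t_0\ge n$''. Your slice facts (a)--(c) are correct, and the case of an empty slice is handled correctly (the first non-empty slice violates \ref{th:DiscreteTime:Main:GRAPH_zero}). But the core case, all slices non-empty, is not proved. The pigeonhole you invoke forces nothing: $N\ge n$ slices range over $2^n-1$ possible non-empty subsets of $\{1,\dots,n\}$, so no repetition or ``stable'' slice is guaranteed. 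What the slice data would naturally yield is this: if two \emph{consecutive} slices coincide, $V_s=V_{s+1}=W$, then (b) gives $W\subseteq\Pre(W)$ and (a),(c) give the singleton conditions for all $v\notin W$, violating \ref{th:DiscreteTime:Main:GRAPH_nonzero}; but (a)--(c) alone permit strictly growing, never-repeating slice sequences --- the witness of Example~\ref{ex:Poljak92mod} has slices $\{1,2,5,6\}$, $\{1,2,4,5,6\}$, $\{1,\dots,6\}$ --- so some genuinely new argument exploiting $N\ge n$ is needed, and you concede yourself (``a careful layer-by-layer simulation \dots\ will be needed'') that you do not have it, in particular for $N=n$. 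For comparison, the paper does not prove this combinatorial implication at all: it proves (ii) directly by induction on $n$, using \ref{th:DiscreteTime:Main:GRAPH_zero}--\ref{th:DiscreteTime:Main:GRAPH_nonzero} to peel off either a $\mathcal{B}$-column with a single nonzero or a vanishing $\mathcal{A}$-column, passing to an $(n-1)$-dimensional system on a shortened interval (this is exactly where $t_0+n\le t_1$ enters), and closing the step with transition-matrix and adjoint-equation identities plus the Hautus criterion. Either supply the missing combinatorial case or switch to such an induction; as written, the crucial direction of (ii) is unproven.

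Part~(i) follows essentially the paper's route (condition \ref{th:DiscreteTime:Main:ShortInterval:GRAPH} is \ref{th:DiscreteTime:Main:GRAPH_zero} for the lifted pattern, hence by Theorem~\ref{th:MayedaYamada79} equivalent to surjectivity of every $K\in\mathcal{K}$, and the row-rank of $K$ is reduced to the span of the columns of $\Phi(t_1,s+1)B(s)$), with your left-nullspace cascade playing the role of the paper's Gaussian elimination. One step is missing, though: your cascade treats the $[\id]_{\mathalpha{\sim}}$ blocks as literal identity matrices, whereas a general realization $K\in\mathcal{K}$ carries arbitrary nonsingular diagonal matrices in those positions. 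For the direction ``every system of pattern $(\mathcal{A},\mathcal{B})$ controllable $\Rightarrow$ every $K\in\mathcal{K}$ surjective'' you must account for these realizations; the paper does so by observing that the diagonal blocks can be absorbed by scaling the columns of $A(t)$, which yields another system of the same pattern. That is a one-line repair, but it should be stated.
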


\begin{proof}
As for the latter claim, first observe that the condition is
obviously necessary. In order to prove that it is also sufficient,
assume that the system
\ref{e:DiscreteTimeSystem} is of nonzero pattern
$(\mathcal{A},\mathcal{B})$.
If $n = 1$, application of Theorem
\ref{th:MayedaYamada79} yields
$\mathcal{B} \not= \circ$,
and in particular,
$B(t_1 - 1) \not= 0$. Then the system \ref{e:DiscreteTimeSystem}
is controllable on $\intcc{t_0,t_1}$ by Proposition
\ref{prop:Controllability} since $\Phi(t_1,t_1) = 1$.

If $n > 1$, we assume that the theorem holds for all systems with
$(n-1)$-dimensional state space. We let $p \in \mathbb{F}^n$ satisfy
\ref{e:prop:Controllability:DiscreteTime} and show below
that then $p = 0$ necessarily, so that the system
\ref{e:DiscreteTimeSystem} is controllable on $\intcc{t_0,t_1}$ by
Proposition \ref{prop:Controllability}.

Let $V = \{ 1,\dots, n \}$ and observe that by
Theorem \ref{th:MayedaYamada79}, the conditions
\ref{th:DiscreteTime:Main:GRAPH_zero} and
\ref{th:DiscreteTime:Main:GRAPH_nonzero} hold. In particular, there
exists some vertex $v \in \{1, \dots, n + r \}$ such that
$V \cap \Post(\{v\})$ is a singleton.

Assume first that $v \notin V$. Then among the columns of
$\mathcal{B}$ there exists one with exactly one nonzero
component. So,
without loss of generality, $\mathcal{A}$ and
$\mathcal{B}$ can be partitioned according to
$\mathbb{F}^n = \mathbb{F}^{n-1} \times \mathbb{F}$,
\[
\mathcal{A}
=
\left(
\ifCLASSOPTIONdraftclsnofoot%
\arraycolsep.2\arraycolsep%
\def\arraystretch{.7}%
\else%
\arraycolsep.1\arraycolsep%
\fi
\begin{array}{ccc|c}
\rule[-1em]{0pt}{2.5em}& \mathcal{A}_{1,1} && \mathcal{A}_{1,2}\\
\hline
& \mathcal{A}_{2,1} && \mathcal{A}_{2,2}
\end{array}
\right),
\;\;
\mathcal{B}
=
\left(
\ifCLASSOPTIONdraftclsnofoot%
\arraycolsep.2\arraycolsep%
\def\arraystretch{.7}%
\else%
\arraycolsep.1\arraycolsep%
\fi
\begin{array}{ccc|c}
\rule[-1em]{0pt}{2.5em}& \mathcal{B}_{1,1} && \mathcal{B}_{1,2}\\
\hline
& \mathcal{B}_{2,1} && \mathcal{B}_{2,2}
\end{array}
\right),
\]
where
$\mathcal{B}_{2,2} = \star \in \mathbb{F} / \mathalpha{\sim}$,
$\mathcal{B}_{1,2} = [ 0 ]_{\mathalpha{\sim}} \in \mathbb{F}^{n-1} / \mathalpha{\sim}$,
and $\Phi$ as well as the coefficient matrices $A$ and $B$ are
partitioned analogously.
Moreover, since $\mathcal{B}_{1,2} = [ 0 ]_{\mathalpha{\sim}}$,
the conditions \ref{th:DiscreteTime:Main:GRAPH_zero} and
\ref{th:DiscreteTime:Main:GRAPH_nonzero} still hold when
$n$, $\mathcal{A}$ and $\mathcal{B}$ is replaced with $n-1$,
$\mathcal{A}_{1,1}$ and $(\mathcal{B}_{1,1},\mathcal{A}_{1,2})$,
respectively. Consequently, by our induction hypothesis, the following
system is controllable on $\intcc{t_0 + 1,t_1}$:
\begin{equation}
\label{e:th:DiscreteTime:Main:proof:dim=n-1}
x(t+1)
=
A_{1,1}(t) x(t) + B_{1,1}(t) u_1(t) + A_{1,2}(t) u_2(t).
\end{equation}

Let $p$ take the form
$p = (q,\alpha) \in \mathbb{F}^{n-1} \times \mathbb{F}$ and consider
the last column of $p^{\ast} \Phi(t_1,s+1) B(s)$. That column equals
$q^{\ast} B_{1,2}(s) + \alpha^{\ast} B_{2,2}(s)$ if $s = t_1 - 1$,
and hence, the condition \ref{e:prop:Controllability:DiscreteTime}
implies $\alpha = 0$ since
$B_{1,2} = 0$ and $B_{2,2}(t_1 - 1) \not= 0$. It follows that
\begin{equation}
\label{e:th:DiscreteTime:Main:proof:Phi12}
q^{\ast} \Phi_{1,2}(t_1,s + 1) = 0
\text{\ for all $s \in \intco{t_0,t_1}$}.
\end{equation}
Next define
$z(s) = q^{\ast} (\Phi_{1,1}(t_1,s) - \Psi(t_1,s))$, where $\Psi$ is
the transition matrix of the system
\ref{e:th:DiscreteTime:Main:proof:dim=n-1}.
Consider the adjoint equation
\begin{equation}
\label{e:DiscreteTimeAdjoint}
\Phi(t,s) = \Phi(t,s+1) A(s)
\end{equation}
of the system \ref{e:DiscreteTimeSystem}, which holds for all $t,s \in
\mathbb{Z}$ for which $s < t$, to see that
\ref{e:th:DiscreteTime:Main:proof:Phi12},
\ref{e:DiscreteTimeAdjoint} and the adjoint equation of the system
\ref{e:th:DiscreteTime:Main:proof:dim=n-1} imply that
$z(s) = z(s + 1) A_{1,1}(s)$ for all $s \in \intco{t_0,t_1}$.
From $z(t_1) = 0$ it follows that $z = 0$, hence
\begin{equation}
\label{e:th:DiscreteTime:Main:proof:Phi11}
q^{\ast} \Phi_{1,1}(t_1,s+1)
=
q^{\ast} \Psi(t_1,s+1)
\text{\ for all $s \in \intco{t_0,t_1}$}.
\end{equation}
Moreover, application of \ref{e:th:DiscreteTime:Main:proof:Phi12} and
\ref{e:th:DiscreteTime:Main:proof:Phi11} to the difference
equation for $\Phi_{1,2}(t_1,\cdot)$ that is part of the
adjoint equation \ref{e:DiscreteTimeAdjoint} yields
\begin{equation}
\label{e:th:DiscreteTime:Main:proof:PsiA12}
q^{\ast}
\Psi(t_1, s + 1) A_{1,2}(s)
=
0
\text{\ for all $s \in \intco{t_0 + 1,t_1}$},
\end{equation}
and condition
\ref{e:prop:Controllability:DiscreteTime} for
$p = (q,0)$, \ref{e:th:DiscreteTime:Main:proof:Phi12} and
\ref{e:th:DiscreteTime:Main:proof:Phi11} additionally show that
\begin{equation}
\label{e:th:DiscreteTime:Main:proof:PsiB11}
q^{\ast}
\Psi(t_1, s + 1) B_{1,1}(s)
=
0
\text{\ for all $s \in \intco{t_0,t_1}$}.
\end{equation}
As the system \ref{e:th:DiscreteTime:Main:proof:dim=n-1} is
controllable on $\intcc{t_0 + 1,t_1}$, it follows from
Prop.~\ref{prop:Controllability} and the identities
\ref{e:th:DiscreteTime:Main:proof:PsiA12} and
\ref{e:th:DiscreteTime:Main:proof:PsiB11} that $q=0$, hence $p=0$.

It remains to consider the case that $v$ cannot be chosen from the
complement of $V$. Then the condition
\ref{th:DiscreteTime:Main:GRAPH_nonzero}
implies $V \not\subseteq \Pre(V)$, so one of the columns of
$\mathcal{A}$ vanishes.
Hence, without loss of generality, $\mathcal{A}$ and
$\mathcal{B}$ can be partitioned according to
$\mathbb{F}^n = \mathbb{F}^{n-1} \times \mathbb{F}$,
\[
\mathcal{A}
=
\left(
\ifCLASSOPTIONdraftclsnofoot%
\arraycolsep.2\arraycolsep%
\def\arraystretch{.7}%
\else%
\arraycolsep.1\arraycolsep%
\fi
\begin{array}{ccc|c}
\rule[-1em]{0pt}{2.5em}& \mathcal{A}_{1,1} && \mathcal{A}_{1,2}\\
\hline
& \mathcal{A}_{2,1} && \mathcal{A}_{2,2}
\end{array}
\right),
\;\;
\mathcal{B}
=
\left(
\ifCLASSOPTIONdraftclsnofoot%
\arraycolsep.2\arraycolsep%
\def\arraystretch{.7}%
\else%
\arraycolsep.1\arraycolsep%
\fi
\begin{array}{cc}
\mathcal{B}_{1} & \rule[-1em]{0pt}{2.5em}\\
\hline
\mathcal{B}_{2}
\end{array}
\right),
\]
where
$\mathcal{A}_{2,2} = [ 0 ]_{\mathalpha{\sim}} \in \mathbb{F} / \mathalpha{\sim}$,
$\mathcal{A}_{1,2} = [ 0 ]_{\mathalpha{\sim}} \in \mathbb{F}^{n-1} / \mathalpha{\sim}$,
and $A$, $B$ and $\Phi$ are partitioned analogously.
Moreover, since $\mathcal{A}_{1,2} = [ 0 ]_{\mathalpha{\sim}}$,
the conditions \ref{th:DiscreteTime:Main:GRAPH_zero} and
\ref{th:DiscreteTime:Main:GRAPH_nonzero} still hold when
$n$, $\mathcal{A}$ and $\mathcal{B}$ is replaced with $n-1$,
$\mathcal{A}_{1,1}$ and $\mathcal{B}_{1}$,
respectively. Thus, %
by our induction hypothesis, the system
\begin{equation}
\label{e:th:DiscreteTime:Main:proof:dim=n-1:another}
x(t+1)
=
A_{1,1}(t) x(t) + B_{1}(t) u(t)
\end{equation}
is controllable on $\intcc{t_0,t_1 - 1}$.

Next, we observe that
$\mathcal{A}_{1,2} = [ 0 ]_{\mathalpha{\sim}}$
and
$\mathcal{A}_{2,2} = [ 0 ]_{\mathalpha{\sim}}$
imply $\Phi_{1,2} = 0$ and $\Phi_{1,1} = \Psi$, where $\Psi$ is
the transition matrix of the system
\ref{e:th:DiscreteTime:Main:proof:dim=n-1:another}. Therefore,
\ref{e:prop:Controllability:DiscreteTime} yields
$
p^{\ast} A_{\cdot,1}(t_1 - 1) \Psi(t_1 - 1, s + 1) B_1(s)
=0
$
for all $s \in \intco{t_0,t_1 - 1}$, where $A_{\cdot,1}(t_1 - 1)$
consists of the first $n-1$ columns of $A(t_1 - 1)$. Then
$p^{\ast} A(t_1 - 1) = 0$ by Proposition \ref{prop:Controllability}
since the system \ref{e:th:DiscreteTime:Main:proof:dim=n-1:another} is
controllable on $\intcc{t_0,t_1 - 1}$. Moreover, for $s = t_1 - 1$ the
identity \ref{e:prop:Controllability:DiscreteTime}
yields $p^{\ast} B(t_1 - 1) = 0$, so we arrive at
\begin{equation}
\label{e:th:DiscreteTime:Main:proof:61}
p^{\ast} (A(t_1 - 1),B(t_1 - 1)) = 0.
\end{equation}
By our assumption,
the pair
$(A(t_1 - 1),B(t_1 - 1))$ is controllable, so the coefficient
matrix in \ref{e:th:DiscreteTime:Main:proof:61} is surjective
by the Hautus criterion.
It follows that $p=0$ in the case that $v \in V$ either, which
completes our proof of the second claim of the theorem.

To prove the first claim, first observe that the condition
\ref{th:DiscreteTime:Main:ShortInterval:GRAPH} is equivalent to the
condition \ref{th:DiscreteTime:Main:GRAPH_zero} with
$(\mathcal{A},\mathcal{B})$, $n$ and $r$ replaced by
$([0]_{\mathalpha{\sim}},\mathcal{K})$,
$n (t_1 - t_0)$ and $r (t_1 - t_0)$, respectively. Hence, by Theorem
\ref{th:MayedaYamada79}, the condition
\ref{th:DiscreteTime:Main:ShortInterval:GRAPH}
holds iff every $K \in \mathcal{K}$ is surjective.
Next, we construct $K \in \mathcal{K}$ by
replacing the patterns $[0]_{\mathalpha{\sim}}$, $[\id]_{\mathalpha{\sim}}$, $\mathcal{A}$ and
$\mathcal{B}$ in the block row $i$ of $\mathcal{K}$,
$i \in \{1, \dots, t_1 - t_0\}$, by the matrices
$0$, $\id$, $A(t_0 + i -1)$ and $B(t_0 + i -1)$, respectively. 
Using elementary Gaussian operations in the same manner as in
the time-invariant case
\cite[Th.~6.2(iv), Ch.~2.6]{Rosenbrock70}, it follows that $K$ is
surjective iff the columns of $\Phi(t_1,s+1)B(s)$ for
$s \in \intco{t_0,t_1}$ span $\mathbb{F}^n$. In view of Proposition
\ref{prop:Controllability} and the observation that replacing the
blocks $\id$ in $K$ by nonsingular diagonal matrices corresponds to
suitably scaling the columns of $A(t)$, the proof is complete.
\end{proof}

For
discrete-time
time-varying systems \ref{e:DiscreteTimeSystem} on any
interval $\intcc{t_0,t_1}$, Theorem \ref{th:DiscreteTime:Main}
gives a complete characterization of nonzero patterns that guarantee
controllability on $\intcc{t_0,t_1}$. As we have observed in the proof
above, the condition \ref{th:DiscreteTime:Main:ShortInterval:GRAPH} is
equivalent to the condition \ref{th:DiscreteTime:Main:GRAPH_zero} with
$\mathcal{G}(\mathcal{A},\mathcal{B})$ replaced by
$\mathcal{G}([0]_{\mathalpha{\sim}},\mathcal{K})$, which
can be verified using the algorithm in
\ref{alg:StrongStructuralControllability}.
By the second claim of the Theorem, it
suffices to verify both \ref{th:DiscreteTime:Main:GRAPH_zero} and
\ref{th:DiscreteTime:Main:GRAPH_nonzero} instead if
$t_0 + n \leq t_1$, which is more efficient.
The latter assumption can not be dropped as shown by the following
example. In fact, the case $t_1 - t_0 < n$ remains open for
time-invariant systems
\ref{e:DiscreteTimeSystem}.

\begin{figure}[t]
\centering 
\pgfdeclareimage[width=.99\linewidth]{Poljak92mod_K}{figures/Poljak92modK}%
\psfrag{1}[][]{1}
\psfrag{2}[][]{2}
\psfrag{3}[][]{3}
\psfrag{4}[][]{4}
\psfrag{5}[][]{5}
\psfrag{6}[][]{6}
\psfrag{7}[][]{7}
\psfrag{8}[][]{8}
\psfrag{9}[][]{9}
\psfrag{10}[][]{10}
\psfrag{11}[][]{11}
\psfrag{12}[][]{12}
\psfrag{13}[][]{13}
\psfrag{14}[][]{14}
\psfrag{15}[][]{15}
\psfrag{16}[][]{16}
\psfrag{17}[][]{17}
\psfrag{18}[][]{18}
\psfrag{19}[][]{19}
\psfrag{20}[][]{20}
\psfrag{21}[][]{21}
\psfrag{22}[][]{22}
\psfrag{23}[][]{23}
\psfrag{24}[][]{24}
\noindent
\parbox{.5\linewidth}{\pgfuseimage{Poljak92mod_K}}%
\caption{\label{fig:Poljak92mod_K}
Graph $\mathcal{G}(\mathcal{K})$ investigated in Example
\ref{ex:Poljak92mod}. The vertex set
$V = \{1,\dots,18\} \setminus \{3,4,9\}$ does not meet
the condition \ref{th:DiscreteTime:Main:ShortInterval:GRAPH}.%
}
\end{figure}

\begin{example}
\label{ex:Poljak92mod}
Let $\mathcal{A}$ and $\mathcal{B}$ be given by
\ref{e:ex:Poljak92mod:pattern} and
assume that the system \ref{e:DiscreteTimeSystem} is
of pattern $(\mathcal{A},\mathcal{B})$.
By Example \ref{ex:Poljak92mod:Alg},
the system is controllable if it
is time-invariant, and by Theorem \ref{th:DiscreteTime:Main},
the system is controllable in any case, on any interval
$\intcc{t_0,t_1}$ satisfying $t_0 + 6 \leq t_1$.
In the time-invariant case, let the $6 \times 6$-matrix
$M(t)$ consist of the columns
of $\Phi(t,s+1) B(s)$ for $s \in \intco{t-3,t}$.
Then the determinant $\det M(t)$ of $M(t)$ equals
$\pm
A_{1,2}^2 A_{2,1}^2 A_{3,4} A_{5,6} B_{1,2} B_{2,1} B_{4,1}^2 B_{6,2}^2$, so
the system is controllable on $\intcc{t_0,t_1}$ as soon as
$t_0 + 3 \leq t_1$. See
Proposition \ref{prop:Controllability}.
On the other hand, if the non-zeros in $A$ are all equal to
$1$ identically and the coefficient
$B$ is given by
$
B(t)^{\ast}
=
\left(
\begin{smallmatrix}
-1 & 3^{t/2} & 0 & 1 & 0 & 0 \\
 2 & 0 & 0 & 0 & 0 & 1
\end{smallmatrix}
\right)
$, then $\det M(t) = 0$ for all $t$, so the system is not
controllable on any interval
of the form
$\intcc{t_0,t_0+3}$.
This is consistent with the fact that the condition
\ref{th:DiscreteTime:Main:ShortInterval:GRAPH} is not satisfied if
$t_1 = t_0 + 3$.
See \ref{fig:Poljak92mod_K}.
\end{example}

\subsection{Controllability of Continuous-Time Time-Varying Systems}
\label{ss:ContinuousTimeControllability}

We have just demonstrated that in the discrete-time case,
controllability of all time-invariant systems of a given nonzero
pattern implies the controllability of all time-varying systems of
that pattern.
It turns out that the continuous-time case is quite different in that respect.

\begin{example}
\label{ex:LTI-SSC_but_not_POLY-SSC}
Consider the patterns $\mathcal{A}$ and $\mathcal{B}$ given by
\[
\mathcal{A}
=
\ifCLASSOPTIONdraftclsnofoot%
\arraycolsep.45\arraycolsep%
\def\arraystretch{.5}%
\else%
\arraycolsep.6\arraycolsep%
\fi
\begin{pmatrix}
\circ & \star & \circ \\
\circ & \circ & \star \\
\circ & \circ & \circ
\end{pmatrix}
\;\;\text{and}\;\;
\mathcal{B}
=
\begin{pmatrix}
\star\\
\circ\\
\star
\end{pmatrix}.
\]
The fact that every time-invariant system \ref{e:ContinuousTimeSystem}
of pattern $(\mathcal{A},\mathcal{B})$ is controllable follows from
Theorem \ref{th:MayedaYamada79}, or, alternatively, from the Hautus
criterion.
Now consider the time-varying system \ref{e:ContinuousTimeSystem} of
pattern $(\mathcal{A},\mathcal{B})$ in which the nonzero entries in
$A$ are all equal to $1$ identically and the coefficient $B$ is
given by
$
B(t)
=
(
t^2+1,
0,
-2
)$.
That system is not controllable since the choice
$p = (2, -2 t_1, t_1^2 + 1)$ satisfies the condition
\ref{e:prop:Controllability:ContinuousTime}
whenever $t_0 < t_1$.
\end{example}

\begin{example}
\label{ex:POLY-SSC_but_not_ANALYTIC-SSC}
Consider the patterns $\mathcal{A}$ and $\mathcal{B}$ given in
\ref{e:ex:POLY-SSC_but_not_ANALYTIC-SSC:Pat}.
As before, if the system \ref{e:ContinuousTimeSystem} is
time-invariant and of pattern $(\mathcal{A},\mathcal{B})$, it is
controllable. What is different here is that the same conclusion holds
if we merely assume the nonzero entries in
$A$ and $B$ to be polynomials rather than constants.
This fact is straightforward to verify.
Surprisingly, however, controllability is lost if we assume the
coefficients to be merely analytic. Indeed, if
$A$ and $B$ are given by
\ref{e:ex:POLY-SSC_but_not_ANALYTIC-SSC:Mat},
then the system \ref{e:ContinuousTimeSystem} is of pattern
$(\mathcal{A},\mathcal{B})$, yet it is not controllable as
the choice $p = (1, -\e^{t_1})$ satisfies condition
\ref{e:prop:Controllability:ContinuousTime}.
\end{example}

As the examples demonstrate, we not only need to distinguish
between time-invariant and time-varying systems, but we also have to
be precise about any regularity conditions imposed on the time-varying
coefficients of the system \ref{e:ContinuousTimeSystem}.
In this respect, one rather restrictive class of time-varying systems,
which is used
in the formulation of Theorem \ref{th:ContinuousTime:Main} below,
is that of \begriff{exponentially scaled} systems, by which we mean
systems \ref{e:ContinuousTimeSystem} over $\mathbb{F}$ that can be
transformed into a time-invariant system over $\mathbb{F}$ by means of
a time-varying change of coordinates of the form
$(t,x) \mapsto \exp(\Lambda t) x$, with
$\Lambda$ being diagonal. In other words, we require that there exist a
diagonal matrix $\Lambda \in \mathbb{F}^{n \times n}$ and matrices
$A_0 \in \mathbb{F}^{n \times n}$ and
$B_0 \in \mathbb{F}^{n \times r}$ such that
\begin{equation}
\label{e:EXP}
A(t) = \e^{-\Lambda t} A_0 \e^{\Lambda t} - \Lambda
\text{ and }
B(t) = \e^{-\Lambda t} B_0
\end{equation}
hold for every $t \in \mathbb{R}$.
It follows from \ref{e:EXP} that diagonal entries of $A$ may be added
or removed at will, without affecting any controllability properties
of the system \ref{e:ContinuousTimeSystem}. It is this fact that
distinguishes the discrete-time from the continuous-time case and
leads to more restrictive controllability conditions in the
time-varying  continuous-time case.

Our main result for the continuous-time case, presented below,
characterizes the nonzero patterns that ensure controllability of all
time-varying systems \ref{e:ContinuousTimeSystem}, or, what turns out
to be equivalent, of all exponentially scaled systems
\ref{e:ContinuousTimeSystem}. Obviously then, the result also
characterizes the patterns that ensure controllability of the systems
in any class in between the two extremes. Examples of such classes
include the ones defined by the requirement that certain entries of
the coefficient matrices $A$ and $B$ must be continuous, smooth, or of
constant sign.

\begin{theorem}
\label{th:ContinuousTime:Main}
Let
$t_0, t_1 \in \mathbb{R}$, $t_0 < t_1$, and consider
the following condition.
\begin{enumerate}
\makeatletter
\def\theenumi{$G_\arabic{enumi}$}
\def\labelenumi{\textbf{\boldmath(\theenumi)}}
\def\p@enumi#1{(#1)}
\makeatother
\setcounter{enumi}{\theGraphConditionCtr}
\item
\label{th:ContinuousTime:Main:GRAPH}
For every non-empty subset
$V \subseteq \{1, \dots, n\}$
of vertices of $\mathcal{G}(\mathcal{A},\mathcal{B})$ there exists
some vertex $v \in \{1, \dots, n + r \} \ \setminus V$ such that $V$
contains exactly one successor of $v$ in
$\mathcal{G}(\mathcal{A},\mathcal{B})$.
\setcounter{GraphConditionCtr}{\value{enumi}}
\end{enumerate}
The condition \ref{th:ContinuousTime:Main:GRAPH} is equivalent to
each of the following three statements.

\begin{enumerate}
\item
\label{th:ContinuousTime:Main:ALL}
Every system \ref{e:ContinuousTimeSystem}
of pattern $(\mathcal{A},\mathcal{B})$
is controllable on $\intcc{t_0,t_1}$.
\item
\label{th:ContinuousTime:Main:EXP}
Every exponentially scaled system \ref{e:ContinuousTimeSystem}
of pattern $(\mathcal{A},\mathcal{B})$
is controllable on $\intcc{t_0,t_1}$.
\item
\label{th:ContinuousTime:Main:LTI}
Every time-invariant system \ref{e:ContinuousTimeSystem}
of pattern
$( [ \id ]_{\mathalpha{\sim}} + \mathcal{A},\mathcal{B})$
is controllable.
\popQED
\end{enumerate}
\end{theorem}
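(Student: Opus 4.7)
The plan is to prove the cyclic chain (G) $\Rightarrow$ \ref{th:ContinuousTime:Main:ALL} $\Rightarrow$ \ref{th:ContinuousTime:Main:EXP} $\Rightarrow$ \ref{th:ContinuousTime:Main:LTI} $\Rightarrow$ (G). Three links reduce to earlier material. The equivalence (G) $\iff$ \ref{th:ContinuousTime:Main:LTI} follows from Theorem \ref{th:MayedaYamada79} applied to the augmented pattern $([\id]_{\mathalpha{\sim}} + \mathcal{A}, \mathcal{B})$: the added self-loops make the premise $V \subseteq \Pre(V)$ of \ref{th:DiscreteTime:Main:GRAPH_nonzero} automatic and, for $v \notin V$, leave the sets $\Post(\{v\}) \cap V$ unchanged, so \ref{th:DiscreteTime:Main:GRAPH_nonzero} for the augmented pattern is exactly (G); this in turn forces \ref{th:DiscreteTime:Main:GRAPH_zero} for the augmented pattern by reusing the witness $v \notin V$. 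The step \ref{th:ContinuousTime:Main:ALL} $\Rightarrow$ \ref{th:ContinuousTime:Main:EXP} is immediate, since exponentially scaled systems of pattern $(\mathcal{A},\mathcal{B})$ are a subclass. For \ref{th:ContinuousTime:Main:EXP} $\Rightarrow$ \ref{th:ContinuousTime:Main:LTI}, given $(\tilde A, B_0)$ of pattern $([\id]_{\mathalpha{\sim}} + \mathcal{A}, \mathcal{B})$, set $\lambda_i = \tilde A_{i,i}$ when $\mathcal{A}_{i,i} = \circ$ and $\lambda_i = 0$ otherwise; the scaled triple $(\Lambda, \tilde A, B_0)$ then has pattern $(\mathcal{A}, \mathcal{B})$, and the change of coordinates $z = \e^{\Lambda t} x$ transfers its controllability on $\intcc{t_0,t_1}$ to that of $\dot z = \tilde A z + B_0 u$.

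The heart of the proof is (G) $\Rightarrow$ \ref{th:ContinuousTime:Main:ALL}, which I would establish by induction on $n$, modeled closely on the proof of Theorem \ref{th:DiscreteTime:Main}. The base case $n = 1$ is immediate: (G) with $V = \{1\}$ forces some column of $\mathcal{B}$ to have its unique nonzero entry, so $B(\tau)$ has a coordinate that is nonzero a.e., and scalar transition matrices never vanish. For the inductive step, apply (G) to $V = \{1,\dots,n\}$; this yields an \emph{input} vertex $v \in \{n+1,\dots,n+r\}$, since (G) mandates $v \notin V$. (This is precisely why only the easier ``input-column'' branch of the discrete-time argument arises in the continuous-time case.) After relabeling so that the unique state successor of $v$ is row $n$, partition
\[
\mathcal{A}
=
\begin{pmatrix}
\mathcal{A}_{1,1} & \mathcal{A}_{1,2} \\
\mathcal{A}_{2,1} & \mathcal{A}_{2,2}
\end{pmatrix},
\quad
\mathcal{B}
=
\begin{pmatrix}
\mathcal{B}_{1,1} & [0]_{\mathalpha{\sim}} \\
\mathcal{B}_{2,1} & \star
\end{pmatrix},
\]
verify that $(\mathcal{A}_{1,1}, (\mathcal{B}_{1,1}, \mathcal{A}_{1,2}))$ inherits (G) in dimension $n-1$, and invoke the inductive hypothesis on the reduced system $\dot x = A_{1,1} x + B_{1,1} u_1 + A_{1,2} u_2$, whose transition matrix I denote $\Psi$.

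Now let $p = (q, \alpha) \in \mathbb{F}^{n-1} \times \mathbb{F}$ satisfy \ref{e:prop:Controllability:ContinuousTime}. Inspection of the last column gives $(q^{\ast} \Phi_{1,2}(t_1,\tau) + \alpha^{\ast} \Phi_{2,2}(t_1,\tau)) B_{2,2}(\tau) = 0$ a.e.; since $B_{2,2} \neq 0$ a.e.\ and $\Phi(t_1, \cdot)$ is absolutely continuous, this upgrades to a pointwise identity, which at $\tau = t_1$ (where $\Phi(t_1,t_1) = \id$) forces $\alpha = 0$ and so $q^{\ast} \Phi_{1,2}(t_1,\tau) \equiv 0$. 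Differentiating via the continuous-time adjoint relation $\partial_s \Phi(t,s) = -\Phi(t,s) A(s)$ (a.e.) yields $q^{\ast} \Phi_{1,1}(t_1,\tau) A_{1,2}(\tau) = 0$ a.e. The auxiliary function $z(s) := q^{\ast}(\Phi_{1,1}(t_1,s) - \Psi(t_1,s))$ then satisfies $\dot z = -z A_{1,1}$ with $z(t_1) = 0$, hence vanishes identically, giving $q^{\ast} \Phi_{1,1}(t_1,s) \equiv q^{\ast} \Psi(t_1,s)$. Plugging back into \ref{e:prop:Controllability:ContinuousTime} for the remaining columns produces $q^{\ast} \Psi(t_1,\tau) (B_{1,1}(\tau), A_{1,2}(\tau)) = 0$ a.e., and Proposition \ref{prop:Controllability} applied to the reduced system forces $q = 0$, hence $p = 0$.

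The main obstacle I anticipate is the pointwise-versus-a.e.\ bookkeeping peculiar to the continuous-time setting: the adjoint identity and the Proposition \ref{prop:Controllability} condition hold only a.e., while absolute continuity of $\tau \mapsto \Phi(t_1,\tau)$ must be exploited to upgrade the crucial identities (most delicately, the passage that eliminates $\alpha$) to pointwise equalities. Because Definition \ref{def:OfPattern} imposes no continuity hypothesis on $A$ or $B$ beyond local integrability, each upgrade must be justified against a null set of exceptional times; this technical cost has no discrete-time analogue, and it is precisely the mechanism behind (G) being strictly stronger than its discrete-time counterpart in Theorem \ref{th:DiscreteTime:Main}.
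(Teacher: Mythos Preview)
Your proposal is correct and follows essentially the same approach as the paper: the same cyclic chain of implications, the same reduction of \ref{th:ContinuousTime:Main:EXP} $\Rightarrow$ \ref{th:ContinuousTime:Main:LTI} via a diagonal $\Lambda$ (your choice $\lambda_i \in \{\tilde A_{i,i},0\}$ differs cosmetically from the paper's $\lambda_i = (A_0)_{i,i} + \{0,1\}$ but works identically), and the same inductive argument for \ref{th:ContinuousTime:Main:GRAPH} $\Rightarrow$ \ref{th:ContinuousTime:Main:ALL} that mirrors the $v\notin V$ branch of the discrete-time proof, including the continuity upgrade to kill $\alpha$ and the $z$-trick to identify $q^{\ast}\Phi_{1,1}$ with $q^{\ast}\Psi$. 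The paper presents the inductive step more tersely by pointing back to Theorem \ref{th:DiscreteTime:Main} and listing the three continuous-time modifications, but the content is the same as what you spell out.
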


The condition \ref{th:ContinuousTime:Main:GRAPH} is obviously
invariant with respect to the addition of any loops to the graph
$\mathcal{G}(\mathcal{A},\mathcal{B})$ and is thus equivalent to the
condition \ref{th:DiscreteTime:Main:GRAPH_nonzero} with
$[ \id ]_{\mathalpha{\sim}} + \mathcal{A}$
at the place of $\mathcal{A}$.
Note also that the patterns in conditions
\ref{th:ContinuousTime:Main:ALL} and \ref{th:ContinuousTime:Main:LTI}
differ.

\begin{proof}[Proof of Theorem \ref{th:ContinuousTime:Main}]
In the chain
\ref{th:ContinuousTime:Main:ALL}
\implies
\ref{th:ContinuousTime:Main:EXP}
\implies
\ref{th:ContinuousTime:Main:LTI}
\implies
\ref{th:ContinuousTime:Main:GRAPH}
\implies
\ref{th:ContinuousTime:Main:ALL}, the first implication is obvious,
and the third follows from Theorem \ref{th:MayedaYamada79} and the
remark preceding this proof.

\label{rev1:point2}
In order to prove that \ref{th:ContinuousTime:Main:LTI} follows from
\ref{th:ContinuousTime:Main:EXP}, we let
$A_0 \in [ \id ]_{\mathalpha{\sim}} + \mathcal{A}$ and
$B_0 \in \mathcal{B}$ and
define the diagonal matrix $\Lambda \in \mathbb{F}^{n \times n}$
by the requirement
\[
\Lambda_{i,i}
=
(A_0)_{i,i}
+
\begin{cases}
0, \text{if $\mathcal{A}_{i,i} = 0$},\\
1, \text{otherwise}
\end{cases}
\]
for all $i \in \{1, \dots, n\}$. If the coefficients $A$ and $B$
of the system \ref{e:ContinuousTimeSystem} satisfy \ref{e:EXP} for all
$t \in \mathbb{R}$, that system is exponentially
scaled and of nonzero pattern $(\mathcal{A},\mathcal{B})$, and hence,
it is controllable on $\intcc{t_0,t_1}$ by
\ref{th:ContinuousTime:Main:EXP}. Then, as
controllability is invariant with respect to the
change of coordinates given by \ref{e:EXP}, the pair $(A_0,B_0)$ is
controllable, which proves \ref{th:ContinuousTime:Main:LTI}.

In order to show that \ref{th:ContinuousTime:Main:GRAPH} implies
\ref{th:ContinuousTime:Main:ALL}, assume that the system
\ref{e:ContinuousTimeSystem} is of pattern
$(\mathcal{A},\mathcal{B})$.
If $n = 1$, application of
\ref{th:ContinuousTime:Main:GRAPH} yields
$\mathcal{B} \not= \circ$,
so $B(s) \not= 0$ for
a.e. $s \in \intcc{t_0,t_1}$. Then, since $\Phi(t_1,s) \not= 0$ for
all $s \in \intcc{t_0,t_1}$, the system \ref{e:ContinuousTimeSystem}
is controllable on $\intcc{t_0,t_1}$ by
Prop.~\ref{prop:Controllability}.

The proof in the case that $n > 1$ is analogous to that part of the
proof of
Theorem \ref{th:DiscreteTime:Main} where it is assumed that
$v \notin V$, with only three differences. Firstly, the
continuous-time system
\label{rev1:point4}
$
\dot x(t)
=
A_{1,1}(t) x(t) + B_{1,1}(t) u_1(t) + A_{1,2}(t) u_2(t)
$, which is controllable on $\intcc{t_0,t_1}$,
is used at the place of the system
\ref{e:th:DiscreteTime:Main:proof:dim=n-1}.
Secondly, from the fact that
$p = (q,\alpha) \in \mathbb{F}^{n-1} \times \mathbb{F}$
satisfies \ref{e:prop:Controllability:ContinuousTime}
we conclude that $\alpha = 0$ by the following argument.
The identity
\ref{e:prop:Controllability:ContinuousTime} yields
$\left(
q^{\ast} \Phi_{1,2}(t_1,s) + \alpha^{\ast} \Phi_{2,2}(t_1,s)
\right)
B_{2,2}(s)
= 0$
for a.e. $s \in \intcc{t_0,t_1}$, so
$\alpha = 0$ as $\Phi(t_1,t_1) = \id$, $\Phi$ is continuous, and
$B_{2,2} \not= 0$ a.e..
Finally, the
continuous-time
adjoint equation
$D_2 \Phi(t,s) = - \Phi(t,s) A(s)$, which holds for all
$t \in \mathbb{R}$ and a.e. $s \in \mathbb{R}$ \cite{Lukes82},
is used at the place of the discrete-time variant
\ref{e:DiscreteTimeAdjoint}. Here, $D_2$ denotes the partial
derivative with respect to the second argument.
Then $z=0$ as
$\dot z(s) = - z(s) A_{1,1}(s)$ holds for a.e. $s \in \intcc{t_0,t_1}$,
and we arrive at the identities
$q^{\ast} \Psi(t_1, s) A_{1,2}(s) = 0 $ and
$q^{\ast} \Psi(t_1, s) B_{1,1}(s) = 0$
for a.e. $s \in \intcc{t_0,t_1}$, at the place of
\ref{e:th:DiscreteTime:Main:proof:PsiA12} and
\ref{e:th:DiscreteTime:Main:proof:PsiB11}, to conclude that $p=0$.
\label{rev1:point2END}
\end{proof}

\section{Observability}
\label{ss:Observability}

In this section we present our results on strong structural observability
for both discrete-time and continuous-time systems. At the end of the
section, we point out a subtlety regarding the duality between
observability and controllability in the discrete-time case.

\begin{definition}
\label{def:Observability}
Let $\Sigma$ denote
the discrete-time system
\ref{e:DiscreteTimeSystem}, \ref{e:OutputEquation} or
the continuous-time system
\ref{e:ContinuousTimeSystem}, \ref{e:OutputEquation}, assume
$t_0, t_1 \in \mathbb{Z}$ or $t_0, t_1 \in \mathbb{R}$, respectively,
$t_0 \leq t_1$,
and let $\varphi$ denote the general solution of $\Sigma$.

The events $(t_0,x_0)$ and $(t_0,x_1)$ are
\begriff{indistinguishable on the interval $\intcc{t_0,t_1}$} if
$x_0, x_1 \in \mathbb{F}^n$
and for every control input
$u \colon \intco{t_0,t_1} \to \mathbb{F}^r$ and
every $t \in \intco{t_0,t_1}_{\mathbb{Z}}$
(resp., a.e. $t \in \intcc{t_0,t_1}$)
we have
$
C(t) \varphi(t,t_0,x_0,u) = C(t) \varphi(t,t_0,x_1,u)
$.

The system $\Sigma$ is
\begriff{observable on the interval $\intcc{t_0,t_1}$}
if $x_0 = x_1$ whenever $(t_0,x_0)$ and $(t_0,x_1)$ are
indistinguishable on $\intcc{t_0,t_1}$.
Finally, $\Sigma$ is \begriff{observable} if
indistinguishability of
$(\tau_0,x_0)$ and $(\tau_0,x_1)$ on the interval
$\intcc{\tau_0,\tau_1}$ for all $\tau_0$ and all $\tau_1 \geq \tau_0$
implies $x_0 = x_1$.
\end{definition}

\ifCLASSOPTIONdraftclsnofoot\looseness-1\fi
We note that our remarks on time-invariant systems at the beginning of
Section \ref{ss:ControllabilityTimeInvariant} equally apply to the
property of observability, and
we define patterns for systems with outputs in analogy to patterns for
systems \ref{e:DiscreteTimeSystem} and \ref{e:ContinuousTimeSystem}.
That is,
the system \ref{e:DiscreteTimeSystem}, \ref{e:OutputEquation}
\begriff{is of output pattern $(\mathcal{A},\mathcal{C})$} if
\begin{equation}
\label{e:SystemOfPattern:Observability}
A(t) \in \mathcal{A}
\text{\;\;and\;\;}
C(t) \in \mathcal{C}
\end{equation}
for all $t \in \mathbb{Z}$,
and the system
\ref{e:ContinuousTimeSystem}, \ref{e:OutputEquation}
is of output pattern $(\mathcal{A},\mathcal{C})$
if $A$ and $C$ are locally integrable and
\ref{e:SystemOfPattern:Observability} holds
for a.e. $t \in \mathbb{R}$.
Here and in what follows we assume that
$\mathcal{C} \in \mathbb{F}^{m \times n}/\mathalpha{\sim}$.

\begin{corollary}
\label{cor:DiscreteTime:Observability}
Let
$t_0, t_1 \in \mathbb{Z}$,
$t_0 < t_1$.
Then every system \ref{e:DiscreteTimeSystem}, \ref{e:OutputEquation}
of output pattern $(\mathcal{A},\mathcal{C})$
is observable on $\intcc{t_0,t_1}$ iff
the condition \ref{th:DiscreteTime:Main:ShortInterval:GRAPH}
holds with
$\mathcal{A}^{\ast}$, $\mathcal{C}^{\ast}$ and $m$ at the place of
$\mathcal{A}$, $\mathcal{B}$ and $r$, respectively.
Moreover, the latter
is equivalent to each of the following two statements under the
additional assumption $t_0 + n \leq t_1$.
\begin{enumerate}
\item
\label{cor:DiscreteTime:Observability:LTI}
Every time-invariant system
\ref{e:DiscreteTimeSystem}, \ref{e:OutputEquation} of output pattern
$(\mathcal{A},\mathcal{C})$ is observable.
\item
\label{cor:DiscreteTime:Observability:GRAPH}
The conditions \ref{th:DiscreteTime:Main:GRAPH_zero} and
\ref{th:DiscreteTime:Main:GRAPH_nonzero}
in Theorem \ref{th:MayedaYamada79} hold with
$\mathcal{A}^{\ast}$, $\mathcal{C}^{\ast}$ and $m$
at the place of
$\mathcal{A}$, $\mathcal{B}$ and $r$, respectively.
\popQED
\end{enumerate}
\end{corollary}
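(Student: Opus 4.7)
The natural approach is via duality: I would reduce observability of a system $(A,C)$ of output pattern $(\mathcal{A},\mathcal{C})$ on $\intcc{t_0,t_1}$ to controllability of a \emph{dual} discrete-time system of pattern $(\mathcal{A}^{\ast},\mathcal{C}^{\ast})$ on the same interval, so that Theorem~\ref{th:DiscreteTime:Main} applies without repeating its inductive argument.

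Concretely, I would associate with $(A,C)$ the time-reversed adjoint $(\tilde A,\tilde B)$ defined by $\tilde A(t) = A(t_0+t_1-1-t)^{\ast}$ and $\tilde B(t) = C(t_0+t_1-1-t)^{\ast}$, whose pattern is $(\mathcal{A}^{\ast},\mathcal{C}^{\ast})$. Using the explicit product formula $\Phi(t,t_0) = A(t-1)\cdots A(t_0)$ from Section~\ref{ss:Systems}, a direct index computation shows that the transition matrix $\tilde\Phi$ of the dual system satisfies $\tilde\Phi(t_1,\tau+1)\tilde B(\tau) = \Phi(\sigma,t_0)^{\ast} C(\sigma)^{\ast}$ under the substitution $\sigma = t_0+t_1-1-\tau$, which is a bijection of $\intco{t_0,t_1}_{\mathbb{Z}}$ onto itself. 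The column span of the family $\{\tilde\Phi(t_1,\tau+1)\tilde B(\tau)\}_{\tau}$ therefore coincides with the column span of the transpose of the observability matrix $\mathcal{O}$ whose rows are $C(\sigma)\Phi(\sigma,t_0)$ for $\sigma\in\intco{t_0,t_1}$. Observability of $(A,C)$ on $\intcc{t_0,t_1}$ is $\ker\mathcal{O}=\{0\}$, equivalently surjectivity of $\mathcal{O}^{\ast}$, which by Proposition~\ref{prop:Controllability} is equivalent to controllability of $(\tilde A,\tilde B)$ on $\intcc{t_0,t_1}$.

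Since the map $(A,C)\mapsto(\tilde A,\tilde B)$ is a bijection between systems of output pattern $(\mathcal{A},\mathcal{C})$ and systems of pattern $(\mathcal{A}^{\ast},\mathcal{C}^{\ast})$, ``every system of output pattern $(\mathcal{A},\mathcal{C})$ is observable on $\intcc{t_0,t_1}$'' is equivalent to ``every system of pattern $(\mathcal{A}^{\ast},\mathcal{C}^{\ast})$ is controllable on $\intcc{t_0,t_1}$''. The first assertion of the corollary is then Theorem~\ref{th:DiscreteTime:Main}(\ref{th:DiscreteTime:Main:i}) applied to the transposed pattern.

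For the additional equivalences under the hypothesis $t_0+n\le t_1$, Theorem~\ref{th:DiscreteTime:Main}(\ref{th:DiscreteTime:Main:ii}) reduces controllability of all systems of pattern $(\mathcal{A}^{\ast},\mathcal{C}^{\ast})$ to controllability of all time-invariant such systems; the latter is equivalent to \ref{cor:DiscreteTime:Observability:LTI} by the classical time-invariant duality between observability of $(A,C)$ and controllability of $(A^{\ast},C^{\ast})$, and Theorem~\ref{th:MayedaYamada79} identifies this with the graph conditions \ref{th:DiscreteTime:Main:GRAPH_zero} and \ref{th:DiscreteTime:Main:GRAPH_nonzero} on the transposed pattern, giving \ref{cor:DiscreteTime:Observability:GRAPH}. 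The only nontrivial step is the time-reversal computation that identifies the observability map with the dual controllability map; once this has been done carefully, the remainder is bookkeeping via Theorems~\ref{th:MayedaYamada79} and~\ref{th:DiscreteTime:Main}.
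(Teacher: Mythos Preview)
Your proposal is correct and follows essentially the same route as the paper: construct a time-reversed adjoint system of pattern $(\mathcal{A}^{\ast},\mathcal{C}^{\ast})$, verify via the product formula for $\Phi$ that its controllability data $\tilde\Phi(t_1,\tau+1)\tilde B(\tau)$ coincide (up to the bijection $\sigma=t_0+t_1-1-\tau$) with the transposes of the observability data $C(\sigma)\Phi(\sigma,t_0)$, and then invoke Proposition~\ref{prop:Controllability} together with Theorem~\ref{th:DiscreteTime:Main} and Theorem~\ref{th:MayedaYamada79}. The paper uses the substitution $\widetilde A(t_1-s)=A(t_0+s)^{\ast}$, $\widetilde B(t_1-s)=C(t_0+s)^{\ast}$, which is the same construction up to an index shift; your explicit identification of the observability matrix with $\mathcal{O}^{\ast}$ is in fact slightly cleaner.
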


\begin{proof}
We assume without loss
of generality
that $B=0$ and $D=0$ in
\ref{e:DiscreteTimeSystem} and \ref{e:OutputEquation}.
Then the requirement that
$\widetilde A(t_1 - s) = A(t_0 + s)^{\ast}$
and
$\widetilde B(t_1 - s) = C(t_0 + s)^{\ast}$
for all $s \in \mathbb{Z}$
defines the adjoint system
\begin{equation}
\label{e:cor:DiscreteTime:Observability:proof:2}
x(t+1) = \widetilde A(t) x(t) + \widetilde B(t) u(t)
\end{equation}
for any given system \ref{e:DiscreteTimeSystem}, \ref{e:OutputEquation},
and vice versa, and the system
\ref{e:DiscreteTimeSystem}, \ref{e:OutputEquation} is of output
pattern $(\mathcal{A},\mathcal{C})$ (resp., time-invariant) iff the
system \ref{e:cor:DiscreteTime:Observability:proof:2} is of pattern
$(\mathcal{A}^{\ast},\mathcal{C}^{\ast})$ (resp., time-invariant).
In addition, the condition \ref{e:prop:Controllability:DiscreteTime} for
the system \ref{e:cor:DiscreteTime:Observability:proof:2} is
equivalent to the condition that
$C(s) \Phi(s,t_0) p = 0$ for every $s \in \intco{t_0,t_1}$, where
$\Phi$ is the transition matrix of the system
\ref{e:DiscreteTimeSystem}.
The latter condition implies $p = 0$ iff
the system \ref{e:DiscreteTimeSystem}, \ref{e:OutputEquation} is
observable on $\intcc{t_0,t_1}$ \cite[Th.~25.9]{Rugh96}.
Hence, on the interval $\intcc{t_0,t_1}$,
the system \ref{e:DiscreteTimeSystem}, \ref{e:OutputEquation}
is observable iff
the adjoint system \ref{e:cor:DiscreteTime:Observability:proof:2}
is controllable, and an application of
Theorem \ref{th:DiscreteTime:Main} completes the proof.
\end{proof}

In order to be able to present our continuous-time
observability result we extend the notion of \begriff{exponentially
  scaled system} to systems with outputs in the obvious way: The
system \ref{e:ContinuousTimeSystem}, \ref{e:OutputEquation} is
exponentially scaled if there exist a
diagonal matrix $\Lambda \in \mathbb{F}^{n \times n}$ and matrices
$A_0 \in \mathbb{F}^{n \times n}$,
$B_0 \in \mathbb{F}^{n \times r}$
and $C_0 \in \mathbb{F}^{m \times n}$ such that both \ref{e:EXP} and
$C(t) = C_0 \exp(\Lambda t)$ hold for every $t \in\mathbb{R}$.

\begin{corollary}
\label{cor:ContinuousTime:Observability}
Let
$t_0, t_1 \in \mathbb{R}$, $t_0 < t_1$.
Then the following four statements are equivalent:
\begin{enumerate}
\item
\label{cor:ContinuousTime:Observability:ALL}
Every system \ref{e:ContinuousTimeSystem}, \ref{e:OutputEquation}
of output pattern $(\mathcal{A},\mathcal{C})$
is observable on $\intcc{t_0,t_1}$.
\item
\label{cor:ContinuousTime:Observability:EXP}
Every exponentially scaled system
\ref{e:ContinuousTimeSystem}, \ref{e:OutputEquation}
of output pattern $(\mathcal{A},\mathcal{C})$
is observable on $\intcc{t_0,t_1}$.
\item
\label{cor:ContinuousTime:Observability:LTI}
Every time-invariant system
\ref{e:ContinuousTimeSystem}, \ref{e:OutputEquation}
of output pattern
$( [ \id ]_{\mathalpha{\sim}} + \mathcal{A},\mathcal{C})$
is observable.
\item
\label{cor:ContinuousTime:Observability:GRAPH}
Condition \ref{th:ContinuousTime:Main:GRAPH}
holds with
$(\mathcal{A}^{\ast},\mathcal{C}^{\ast})$ and $m$
at the place of
$(\mathcal{A},\mathcal{B})$ and $r$, respectively.
\popQED
\end{enumerate}
\end{corollary}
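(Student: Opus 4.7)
The plan is to reduce the four observability statements to the four controllability statements of Theorem \ref{th:ContinuousTime:Main} via continuous-time duality, following the strategy of Corollary \ref{cor:DiscreteTime:Observability} but with the continuous-time adjoint. As there, we may assume without loss of generality that $B=0$ and $D=0$ in \ref{e:ContinuousTimeSystem}, \ref{e:OutputEquation}, and we associate to every such system the dual control system
\begin{equation*}
\dot z(t) = - A(t)^{\ast} z(t) + C(t)^{\ast} v(t),
\end{equation*}
and conversely. Since the transition matrix of the dual system at $(t,s)$ equals $\Phi(s,t)^{\ast}$, where $\Phi$ is the transition matrix of the original system, Proposition \ref{prop:Controllability} together with the standard rank-type observability criterion for time-varying systems implies that the original system is observable on $\intcc{t_0,t_1}$ iff its dual is controllable on $\intcc{t_0,t_1}$.

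Next, I would verify that the assignment $(A,C) \mapsto (-A^{\ast},C^{\ast})$ restricts to bijections between the relevant classes of systems. Since negation and transposition preserve the zero pattern, an output system is of output pattern $(\mathcal{A},\mathcal{C})$ iff its dual is of pattern $(\mathcal{A}^{\ast},\mathcal{C}^{\ast})$. Time-invariance is preserved, and the pattern operations satisfy $([\id]_{\mathalpha{\sim}} + \mathcal{A})^{\ast} = [\id]_{\mathalpha{\sim}} + \mathcal{A}^{\ast}$, which aligns statement \ref{cor:ContinuousTime:Observability:LTI} with statement \ref{th:ContinuousTime:Main:LTI} of Theorem \ref{th:ContinuousTime:Main} applied to the dual pattern. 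A direct substitution into \ref{e:EXP} shows that an exponentially scaled system with parameters $(\Lambda, A_0, C_0)$ dualizes to an exponentially scaled control system with parameters $(-\Lambda^{\ast}, -A_0^{\ast}, C_0^{\ast})$, and this assignment is bijective. Finally, \ref{cor:ContinuousTime:Observability:GRAPH} is by construction just condition \ref{th:ContinuousTime:Main:GRAPH} for the dual pattern $(\mathcal{A}^{\ast},\mathcal{C}^{\ast})$.

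With these identifications, each of the statements \ref{cor:ContinuousTime:Observability:ALL}--\ref{cor:ContinuousTime:Observability:GRAPH} of the corollary matches the corresponding statement (i)--(iv) of Theorem \ref{th:ContinuousTime:Main} applied to the dual data with $\mathcal{A}^{\ast}$, $\mathcal{C}^{\ast}$ and $m$ in place of $\mathcal{A}$, $\mathcal{B}$ and $r$, and the mutual equivalence is then immediate from that theorem. I expect the main obstacle to be the careful verification that exponential scaling is preserved by the dualization in both directions; the pattern, time-invariance, and graph correspondences are essentially a matter of bookkeeping once the parameter matching $(\Lambda,A_0,C_0) \leftrightarrow (-\Lambda^{\ast}, -A_0^{\ast}, C_0^{\ast})$ is pinned down.
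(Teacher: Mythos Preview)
Your proposal is correct and follows essentially the same route as the paper: the paper omits the proof, remarking only that it is analogous to that of Corollary~\ref{cor:DiscreteTime:Observability} with the continuous-time adjoint $\dot x(t) = -A(t)^{\ast} x(t) + C(t)^{\ast} u(t)$ in place of the discrete-time adjoint, and then invoking Theorem~\ref{th:ContinuousTime:Main}. Your write-up supplies exactly the details the paper leaves implicit, including the check that exponential scaling is preserved under dualization via $(\Lambda,A_0,C_0) \leftrightarrow (-\Lambda^{\ast},-A_0^{\ast},C_0^{\ast})$.
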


The proof of Corollary \ref{cor:ContinuousTime:Observability} is
omitted as it is analogous to that of Corollary
\ref{cor:DiscreteTime:Observability}. In particular, the adjoint
system $\dot x(t) = - A(t)^{\ast} x(t) + C(t)^{\ast} u(t)$, which is
controllable on $\intcc{t_0,t_1}$ iff
the system \ref{e:ContinuousTimeSystem}, \ref{e:OutputEquation} is
observable on $\intcc{t_0,t_1}$, see \cite{Sontag98}, is used at the
place of the system \ref{e:cor:DiscreteTime:Observability:proof:2}.

Given the duality between our controllability and our observability
results, we warn against a subtlety with
the discrete-time case. While it follows from Theorem
\ref{th:DiscreteTime:Main} and Corollary
\ref{cor:DiscreteTime:Observability} that \emph{every} system
\ref{e:DiscreteTimeSystem}, \ref{e:OutputEquation}
of output pattern $(\mathcal{A},\mathcal{C})$
is observable on $\intcc{t_0,t_1}$
iff \emph{every} system
\begin{equation}
\label{e:FalseDiscreteTimeAdjoint}
x(t + 1) = A(t)^{\ast} x(t) + C(t)^{\ast} u(t)
\end{equation}
of pattern $(\mathcal{A}^{\ast},\mathcal{C}^{\ast})$
is controllable on $\intcc{t_0,t_1}$, the respective properties of the
two systems are not, in general, equivalent.

\begin{example}
\label{ex:DiscreteTimeDualityGap}
The system \ref{e:DiscreteTimeSystem}, \ref{e:OutputEquation} is
not observable if
\[
A(t)
=
\ifCLASSOPTIONdraftclsnofoot\arraycolsep1.6\arraycolsep\else\arraycolsep.6\arraycolsep\fi
\begin{pmatrix}
-2 & \e^{1-t} \\
 -3 \e^t & 2 \e
\end{pmatrix}
\;\;\text{and}\;\;
C(t)
=
(\e^t,-\e)
\]
for all $t \in \mathbb{Z}$,
whereas the system \ref{e:FalseDiscreteTimeAdjoint} is
controllable on $\intcc{t_0,t_1}$ whenever $t_0 + 2 \leq t_1$.
Note also that $A(t)$ is invertible for all $t$.
\end{example}
\vspace*{-1.15\baselineskip}
\bibliographystyle{IEEEtran}
\bibliography{IEEEtranBSTCTL,preambles,mrabbrev,strings,fremde,eigeneCONF,eigeneJOURNALS,eigenePATENT,eigeneREPORTS,eigeneTALKS,eigeneTHESES}
\end{document}